\newcommand{\stkout}[1]{\ifmmode\text{\sout{\ensuremath{#1}}}\else\sout{#1}\fi}
\newtheorem{lemma}{Lemma}[section]
\newtheorem{theorem}{Theorem}[section]
\newtheorem{corollary}{Corollary}[section]
\theoremstyle{definition}
\newtheorem{notation}{Notation}[section]
\theoremstyle{remark}
\newtheorem{remark}{Remark}[section]
\newtheoremstyle{sltheorem}
{}                
{}                
{\slshape}        
{}                
{\bfseries}       
{.}               
{ }               
{}                
\theoremstyle{sltheorem}
\numberwithin{theorem}{section}
\numberwithin{equation}{section}
\crefname{section}{Section}{Sections}
\crefname{subsection}{Section}{Sections}
\crefname{condition}{Condition}{Conditions}
\crefname{hypothesis}{Hypothesis}{Conditions}
\crefname{assumption}{Assumption}{Assumptions}
\crefname{lemma}{Lemma}{Lemmas}
\crefname{fact}{Fact}{Facts}
\Crefname{figure}{Figure}{Figures}
\newcommand{\vertiii}[1]{{\left\vert\kern-0.25ex\left\vert\kern-0.25ex\left\vert #1 
    \right\vert\kern-0.25ex\right\vert\kern-0.25ex\right\vert}}
\newcommand{\lamstr}{\lambda^{\mspace{-2mu}*}}
\newcommand{\rc}{{F}}    
\newcommand{\Uadm}{\mathscr{A}}
\newcommand{\Ag}{{\mathcal{A}}}  
\newcommand{\fB}{{\mathfrak{B}}}  
\newcommand{\Cc}{{C}}   
\newcommand{\sF}{{\mathfrak{F}}}   
\newcommand{\eom}{{\mathcal{M}}} 
\newcommand{\Lg}{{\mathcal{L}}}    
\newcommand{\Lp}{{L}}            
\newcommand{\cP}{{\mathcal{P}}}  
\newcommand{\RR}{\mathds{R}}
\newcommand{\NN}{\mathds{N}}
\newcommand{\Rd}{{\mathds{R}^{d}}}
\DeclareMathOperator{\Exp}{\mathbb{E}}
\DeclareMathOperator{\Prob}{\mathbb{P}}
\newcommand{\D}{\mathrm{d}}
\newcommand{\Ind}{\mathds{1}}   
\newcommand{\Sob}{{\mathscr W}}    
\newcommand{\Sobl}{{\mathscr W}_{\text{loc}}} 
\newcommand{\df}{\coloneqq}
\DeclareMathOperator*{\dist}{dist}
\DeclareMathOperator*{\Argmin}{Arg\,min}
\newcommand{\abs}[1]{\lvert#1\rvert}
\newcommand{\norm}[1]{\lVert#1\rVert}
\newcommand{\babs}[1]{\bigl\lvert#1\bigr\rvert}
\definecolor{dmagenta}{rgb}{.4,.1,.5}
\definecolor{dblue}{rgb}{.0,.0,.5}
\definecolor{mblue}{rgb}{.0,.0,.6}
\definecolor{ddblue}{rgb}{.0,.0,.4}
\definecolor{dred}{rgb}{.6,.0,.0}
\definecolor{dgreen}{rgb}{.0,.5,.0}
\definecolor{Eeom}{rgb}{.0,.0,.5}
\newcommand{\ttl}{\Large
On uniqueness of solutions to viscous HJB equations\\[3pt] with a subquadratic 
nonlinearity in the gradient}
\begin{document}
\title[A note on viscous Hamilton-Jacobi equations]{\ttl}

\author[Ari Arapostathis]{Ari Arapostathis$^*$}
\address{$^*$ Department of Electrical and Computer Engineering,
The University of Texas at Austin, 2501 Speedway, EER~7.824,
Austin, TX~~78712, USA}
\email{ari@utexas.edu}
\author[Anup Biswas]{Anup Biswas$^\dag$}
\address{$^\dag$ Department of Mathematics,
Indian Institute of Science Education and Research,
Dr. Homi Bhabha Road, Pune 411008, India}
\email{anup@iiserpune.ac.in}
\author[Luis Caffarelli]{Luis Caffarelli$^\ddag$}
\address{$^\ddag$Department of Mathematics,
The University of Texas at Austin, 2515 Speedway, RLM 10.150,
Austin, TX 78712}
\email{caffarel@math.utexas.edu}

\begin{abstract}
Uniqueness of positive solutions to viscous Hamilton--Jacobi--Bellman (HJB) equations
of the form
$-\Delta u(x) + \frac{1}{\gamma} \abs{D{u}(x)}^\gamma  =
f(x) - \lambda$, 
with $f$ a coercive function and $\lambda$ a constant,
in the subquadratic case, that is, $\gamma\in(1,2)$,
appears to be an open problem.
Barles and Meireles [\emph{Comm. Partial Differential Equations} \textbf{41} (2016)]
show uniqueness in the case that $f(x) \approx \abs{x}^\beta$
and $\abs{D f(x)}\lessapprox \abs{x}^{(\beta-1)_+}$ for some
$\beta>0$, essentially matching earlier results of Ichihara,
who considered more general Hamiltonians but with better regularity for $f$.
Without enforcing this assumption, to our knowledge,
there are no results on uniqueness in the literature.
In this short article, we show
that the equation has a unique positive solution
for any locally Lipschitz continuous, coercive $f$ which satisfies
$\babs{D f(x)} \le \kappa\bigl(1 +\abs{f(x)}^{2-\nicefrac{1}{\gamma}}\bigr)$
for some positive constant $\kappa$.
Since $2-\frac{1}{\gamma}>1$, this assumption imposes very mild restrictions on
the growth of the potential $f$.
We also show that this solution fully characterizes
optimality for the associated ergodic problem.
Our method involves the study of an infinite dimensional linear program for
elliptic equations for measures,
and is very different from earlier approaches.
It also applies to the larger class of Hamiltonians studied by Ichihara,
and we show that it is well suited to provide verification of optimality
results for the associated ergodic control problems, even in a pathwise sense,
and without resorting to the parabolic problem.
\end{abstract}

\keywords{viscous Hamilton--Jacobi equations, infinitesimally invariant measures,
ergodic control,  convex duality}

\subjclass[2000]{35J60, 35P30, 35B40, 35B50}

\maketitle


\section{Introduction}

We consider the viscous Hamilton--Jacobi--Bellman (HJB) equation
\begin{equation}\label{EP}
-\Delta u(x) + \frac{1}{\gamma} \abs{D{u}(x)}^\gamma  \,=\,
f(x) - \lambda\,,\tag{\sf{EP}}
\end{equation}
for $(u,\lambda)\in\Cc^2(\Rd)\times\RR$, with $\gamma>1$.
Here, $f\in\Sobl^{1,\infty}(\Rd)$ and is coercive.
By coercive, sometimes
also called \emph{inf-compact}, we refer to a function $f$
whose sublevel sets
$\{x\in\Rd\,\colon f(x)\le r\}$ are compact (or empty) for every $r\in\RR$.
As shown in \cite{Barles-16}, \cref{EP} has a classical solution $u$ for any
$\lambda\le\lamstr$, where
\begin{equation}\label{E-lamstr}
\lamstr \,\df\, \sup\,\bigl\{\lambda\in\Rd\,\colon
\text{\cref{EP}\ has a subsolution}\bigr\}\,.
\end{equation}

This equation which has a long history in the literature,
has been studied in \cites{Ichihara-12,Cirant-14} for somewhat
more general Hamiltonians, and was recently revisited by Barles in \cite{Barles-16}.
What is of interest here, is to characterize the solutions of
\cref{EP} which are bounded from below, that is, without loss of generality,
the positive solutions.
Naturally,
when we refer to this equation having a unique positive solution,
we mean that the solution is unique up to an additive constant.
In the superquadratic case ($\gamma\ge2$), \cite[Theorem~2.6]{Barles-16} shows
that \cref{EP} has a unique positive solution for any coercive $f$,
and in addition, for this solution, $\lambda=\lamstr$.
In the subquadratic case ($\gamma\in(1,2)$), \cite{Barles-16} adopts assumption
(H2) in \cite{Ichihara-12},
which states that $f$ satisfies a bound of the form
\begin{equation}\label{IH2}
c^{-1}\abs{x}^{\beta} - c \,\le\,
f(x) \,\le\, c\bigl(1 +\abs{x}^\beta\bigr)\,,\quad\text{and\ \ }
\babs{D f(x)} \,\le\, c^{-1} \Bigl (1 +\abs{x}^{(\beta-1)_+}\Bigr)
\end{equation}
for some positive constants $\beta$ and $c$ for all $x\in\Rd$.
Without enforcing this assumption, to our knowledge,
there are no results on uniqueness in the literature, and therefore also no
verification of optimality results.
Note that \cite{Cirant-14} introduces an additional stable drift to study the
subquadratic case.

There is substantial literature on viscous HJB equations, other than
\cite{Barles-16,Ichihara-12,Cirant-14} mentioned above.
It is not our intent to review this literature, since it does not
address the problem studied in this paper, but we should at least mention
\cite{Barles-01,Ben-Fre,BenNag-91,Barles-10,Fujita-06a,Ichihara-13a,BenFre-92,Lasry-89}.

We adopt the following assumption for $\gamma\in(1,2)$.

\medskip
\begin{itemize}
\item[\hypertarget{A1}{\sf{(A1)}}]
{\slshape The function $f$ is locally Lipschitz continuous and coercive, and there
exists a constant $\kappa_0$ such that}
\begin{equation*}
\babs{D f(x)} \,\le\, \kappa_0\Bigl(1 +\abs{f(x)}^{2-\frac{1}{\gamma}}\Bigr)
\qquad\forall\,x\in \Rd\,.
\end{equation*}
\end{itemize}
\medskip

We show that, under \hyperlink{A1}{\sf{(A1)}},
there exists a unique positive solution $u$ to \cref{EP}.
In addition, this solution
fully characterizes the ergodic control problem in the sense that
a stationary Markov control is optimal if and only if it agrees a.e.\ on $\Rd$
with the function $\xi_u$ in \cref{EL2.3A} (see \cref{T4.1}).
The method we follow covers the more general Hamiltonians studied in
\cites{Ichihara-12,Cirant-14}, and also improves the
existing results for the superquadratic case.
This is discussed in \cref{S3}.

\subsection{Brief summary of the method}

Consider the operator $\Ag\colon\Cc^2(\Rd)\to\Cc(\Rd\times\Rd)$ defined by
\begin{equation}\label{EAg}
\Ag g(x,\xi) \,\df\, -\Delta g(x) + \xi\cdot D g(x)\,,\quad\text{for\ }
(x,\xi)\in\Rd\times\Rd\,.
\end{equation}
Let $\cP(\Rd\times\Rd)$ denote the space of probability measures on the Borel
$\sigma$-algebra of $\Rd\times\Rd$, denoted as $\fB(\Rd\times\Rd)$,
endowed with the Prokhorov topology.
We say that $\mu\in\cP(\Rd\times\Rd)$ is \emph{infinitesimally invariant} for the
operator $\Ag$ if $\int \Ag g\,\D\mu =0$ for all $g\in\Cc_c^2(\Rd)$, the
latter denoting the functions in $\Cc^2(\Rd)$ with compact support, and denote
the set of these probability measures by $\eom$.
Let
\begin{equation}\label{EF}
\rc(x,\xi) \,\df\, f(x) + \frac{1}{\gamma^*} \abs{\xi}^{\gamma^*} \,,\qquad
\text{with\ \ }
\gamma^* \,\df\, \frac{\gamma}{\gamma-1}\,.
\end{equation}
For $\mu\in\cP(\Rd\times\Rd)$ we use the simple notation
\begin{equation*}
\mu(F)\,\df\, \int_{\Rd\times\Rd} \rc(x,\xi)\,\mu(\D x,\D\xi)\,,
\end{equation*}
and define
\begin{equation}\label{EMF}
\eom_F\,\df\, \bigl\{\mu\in\eom\,\colon \mu(F)<\infty\bigr\}\,.
\end{equation}
In other words, $\eom_F$ is the subset of $\eom$ consisting of those probability
measures under which $F$ is integrable.
It is simple to show that $\eom_F$ is always nonempty.
Thus, since $F$ is coercive, the set
\begin{equation*}
\eom_{F,r}\,\df\, \bigl\{\mu\in\eom\,\colon \mu(F)\le r\bigr\}
\end{equation*}
is compact  for all $r>0$ sufficiently large.
Clearly, it is also convex.

Consider the minimization problem
\begin{equation}\label{LP}
\overline\lambda \,\df\, \inf_{\mu\in\eom}\;\mu(F)\,.\tag{\sf{LP}}
\end{equation}

The lower semicontinuity of $\mu\mapsto \mu(F)$ then implies that
the infimum of \cref{LP} is attained in $\eom$.
We let $\eom_F^*$ denote the set of points in $\eom$ which attain
this infimum.

Our approach to the proof of uniqueness of positive solutions of \cref{EP}
is as follows:  First, we show that if $(u,\lambda)\in\Cc^2(\Rd)\times\RR$
is any pair solving \cref{EP}, with $u$ a positive function, then $\lambda=\overline\lambda$
and some measure $\mu\in\eom$
taking the form $\mu(\D{x},\D\xi) = \nu(\D{x}) \delta_{D{u}(x)}(\D\xi)$,
with $\nu\in\cP(\Rd)$ and $\delta_{D{u}(x)}$ denoting the Dirac mass
at $D{u}(x)$, attains the infimum in \cref{LP}, that
is, it belongs to $\eom_F^*$.
Next, we show that $\eom_F^*$ is a singleton, thus establishing the uniqueness
of a positive solution to \cref{EP}.

\subsection{Notation}
The standard Euclidean norm in $\RR^{d}$ is denoted by $\abs{\,\cdot\,}$,
and $\NN$ stands for the set of natural numbers.
The closure, the boundary and the complement
of a set $A\subset\RR^{d}$ are denoted
by $\Bar{A}$, $\partial{A}$ and $A^{c}$, respectively.
The open ball of radius $r$ in $\RR^{d}$, centered at $x\in\Rd$,
is denoted by $B_{r}(x)$, and $B_r$ is the ball centered at $0$.
We use $a_{\pm}\df \max (\pm a,0)$ for $a\in\RR$.

For a Borel space $Y$, $\cP(Y)$ denotes the set of probability measures
on its Borel $\sigma$-algebra, and $\delta_y$ denotes the Dirac mass at $y\in Y$.
For $\mu\in\cP(Y)$ and a measurable function $g\colon Y\to\RR$ which is
integrable under $\mu$, we often use the simplifying notation
$\mu(g)\df \int_{Y} f\,\D\mu$.

\section{Main results}\label{S2}

Throughout this section we assume $\gamma\in(1,2)$, unless otherwise explicitly mentioned.
Also, without loss of generality  we assume that $f\ge1$, and we scale
a solution of \cref{EP}, which is bounded from below,
by an additive constant so that $\inf_\Rd\, u=1$.

We start with the very useful gradient estimate in \cite[Theorem~B.1]{Ichihara-12},
stated under weaker regularity in \cite[Theorem~A.2]{Barles-16} for \cref{EP}.
It appears that the Bernstein approach for
 this estimate originates in \cite[Theorem~A.1]{Lasry-89}.
We need to use this estimate on small balls, so we scale it as follows.

\begin{corollary}\label{C2.1}
There exists a constant $C$ such that for any solution $u$
of \cref{EP} we have
\begin{equation}\label{EC2.1A}
\sup_{z\in B_r(x)}\,\abs{D{u}(z)} \,\le\, C\,\biggl(r^{-\frac{1}{\gamma-1}} +
\sup_{z\in B_{2r}(x)}\,\bigl(f(z)-\lambda\bigr)_+^{\nicefrac{1}{\gamma}}
+ \sup_{z\in B_{2r}(x)}\,\babs{D f(z)}^{\nicefrac{1}{(2\gamma-1)}}\biggr)
\quad \forall\,x\in\Rd\,,
\end{equation}
and for all $r>0$.
In particular, under \hyperlink{A1}{\sf{(A1)}},
with perhaps a different constant $C_0$, we have
\begin{equation}\label{EC2.1B}
\sup_{z\in B_r(x)}\,\abs{D{u}(z)} \,\le\,
C_0\,\biggl(r^{-\frac{1}{\gamma-1}} + \sup_{z\in B_{2r}(x)}\,
\bigl(f(y)-\lambda\bigr)_+^{\nicefrac{1}{\gamma}}\biggr)
\quad \forall\,x\in\Rd\,,\ \ \forall\,r>0\,.
\end{equation}
\end{corollary}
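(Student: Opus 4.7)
The plan is to derive \cref{EC2.1A} from the standard Bernstein-type unit-ball gradient estimate (the one invoked from Lasry, Ichihara, and Barles) by a scaling argument, and then deduce \cref{EC2.1B} as a direct consequence of \hyperlink{A1}{\sf{(A1)}}. Concretely, I would take as the baseline the following unit-ball version: any $\Cc^2$ solution $v$ of $-\Delta v + \frac{1}{\gamma}\abs{Dv}^\gamma = G$ on $B_2$ satisfies
\begin{equation*}
\sup_{B_1}\abs{Dv} \,\le\, C\Bigl(1 + \sup_{B_2}G_+^{\nicefrac{1}{\gamma}} + \sup_{B_2}\abs{DG}^{\nicefrac{1}{(2\gamma-1)}}\Bigr).
\end{equation*}

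Next I would introduce $v(y)\df r^{a}u(x+ry)$ with the unique structure-preserving exponent $a\df(2-\gamma)/(\gamma-1)$, which is positive on the subquadratic range $\gamma\in(1,2)$. A direct substitution confirms that $v$ solves $-\Delta v + \frac{1}{\gamma}\abs{Dv}^\gamma = G_r$ on $B_2$ with $G_r(y)\df r^{a+2}\bigl(f(x+ry)-\lambda\bigr)$, so the baseline estimate applies to $v$. Undoing the scaling then hinges on the three exponent identities
\begin{equation*}
a+1 \,=\, \frac{1}{\gamma-1},\qquad \frac{a+2}{\gamma}\,=\,\frac{1}{\gamma-1},\qquad \frac{a+3}{2\gamma-1}\,=\,\frac{1}{\gamma-1},
\end{equation*}
which are exactly what makes the powers of $r$ in $\sup_{B_2}(G_r)_+^{1/\gamma}$ and $\sup_{B_2}\abs{DG_r}^{1/(2\gamma-1)}$ cancel against the factor $r^{-(a+1)}$ extracted from $\sup_{B_1}\abs{Dv}=r^{a+1}\sup_{B_r(x)}\abs{Du}$, leaving precisely \cref{EC2.1A}.

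For \cref{EC2.1B}, I would apply \hyperlink{A1}{\sf{(A1)}} together with the subadditivity of $t\mapsto t^{1/(2\gamma-1)}$ (valid since $1/(2\gamma-1)<1$ when $\gamma>1$) to the third term on the right-hand side of \cref{EC2.1A}, obtaining
\begin{equation*}
\abs{Df}^{\nicefrac{1}{(2\gamma-1)}} \,\le\, \kappa_0^{1/(2\gamma-1)}\Bigl(1+\abs{f}^{(2-\nicefrac{1}{\gamma})/(2\gamma-1)}\Bigr).
\end{equation*}
The decisive observation is the arithmetic identity $(2-\nicefrac{1}{\gamma})/(2\gamma-1)=\nicefrac{1}{\gamma}$, which is precisely why \hyperlink{A1}{\sf{(A1)}} is the natural threshold for this technique: the bound simplifies to $C\bigl(1+\abs{f}^{1/\gamma}\bigr)$, and one final use of subadditivity together with $f\ge1$ and $\lambda\ge 1$ yields $\abs{f}^{1/\gamma}\le \lambda^{1/\gamma}+(f-\lambda)_+^{1/\gamma}$; the residual $\lambda$-dependent constant is absorbed into an enlarged $C_0$. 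The only genuine analytic input is the Bernstein unit-ball estimate, which is treated as a black box, so the principal obstacle is not the derivation itself but rather the verification of the three exponent identities above and of the critical identity $(2-\nicefrac{1}{\gamma})/(2\gamma-1)=\nicefrac{1}{\gamma}$, which together explain the particular growth rate built into \hyperlink{A1}{\sf{(A1)}}.
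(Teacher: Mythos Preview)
Your proof is correct and follows essentially the same scaling argument as the paper: rescale via $u_r(y) = r^{(2-\gamma)/(\gamma-1)} u(x+ry)$, apply the unit-ball Bernstein estimate from \cite{Ichihara-12}, and unscale using the exponent identities you verified. Your exposition is in fact more detailed than the paper's terse version, particularly in checking those identities and in deriving \cref{EC2.1B} from \hyperlink{A1}{\sf{(A1)}} via $(2-\nicefrac{1}{\gamma})/(2\gamma-1)=\nicefrac{1}{\gamma}$; the only slip is the unneeded assumption $\lambda\ge 1$, since $f^{1/\gamma}\le \lambda_+^{1/\gamma}+(f-\lambda)_+^{1/\gamma}$ holds regardless and the $\lambda_+$-dependent term is absorbed into $C_0$.
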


\begin{proof}
Fix any $x\in\Rd$.
For $r>0$, let $u_r(y)\df r^{\frac{2-\gamma}{\gamma-1}} u(x+ry)$
and $f_r(y) \df r^{\gamma^*}\bigl(f(x+ry) - \lambda\bigr)$.
The function $u_r$ satisfies
\begin{equation}\label{PC2.1A}
-\Delta u_r(y) + \frac{1}{\gamma} \babs{D{u}_r(y)}^\gamma  \,=\, 
f_r(y)\,.
\end{equation}
By \cite[Theorem~B.1]{Ichihara-12}, there exists a constant $C$ such that
any solution $u_r$ of \cref{PC2.1A} satisfies
\begin{equation*}
\begin{aligned}
\sup_{y\in B_1(x)}\,\abs{D{u_r}(y)} &\,\le\, C\,\biggl(1 +
\sup_{y\in B_{2}(x)}\,\bigl(f_r(y)\bigr)_+^{\nicefrac{1}{\gamma}}
+ \sup_{y\in B_{2}(x)}\,\babs{D f_r(y)}^{\nicefrac{1}{(2\gamma-1)}}\biggr)
\quad \forall\,x\in\Rd\,.
\end{aligned}
\end{equation*}
from which \cref{EC2.1A} follows.
%
\end{proof}

We continue by proving a useful lower bound for positive solutions of
\cref{EP}.
Define
\begin{equation*}
\Gamma_x \,\df\, \bigl[f(x)\bigr]^{-\frac{1}{\gamma^*}}\,,\quad x\in\Rd\,.
\end{equation*}

\begin{lemma}\label{L2.1}
Assume \hyperlink{A1}{\sf{(A1)}}.
Then, for every positive solution
$u$ of \cref{EP}, the following hold.
\begin{enumerate}
\item[\textup{(}a\textup{)}]
There exist positive constants $r$ and $\kappa$ such that
\begin{equation}\label{EL2.1A}
\inf_{y\in B_{r}}\, u\bigl( x+ \Gamma_x\, y) \,\ge\,
\kappa\, \bigl[f(x)\bigr]^{\frac{\gamma^*-2}{\gamma^*}} \quad\forall\,x\in\Rd\,.
\end{equation}
\item[\textup{(}b\textup{)}]
There exists a positive constant $M_0$ such that
\begin{equation}\label{EL2.1B}
 \frac{\abs{D{u}(x)}^2}{u(x) }\,\le\, M_0\,f(x)\qquad\forall\,x\in\Rd\,.
\end{equation}
\end{enumerate}
\end{lemma}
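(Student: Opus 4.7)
The plan is to establish \cref{EL2.1A} by a blow-up/compactness contradiction at the natural scale $\Gamma_x$, and then to derive \cref{EL2.1B} directly from \cref{EL2.1A} and the gradient estimate \cref{EC2.1B}. A preparatory observation, used throughout, is that under \hyperlink{A1}{\sf{(A1)}} the function $f^{-1/\gamma^*}$ is globally Lipschitz on $\Rd$: using $1/\gamma+1/\gamma^*=1$ one finds $\babs{D(f^{-1/\gamma^*})}\le \tfrac{\kappa_0}{\gamma^*}\bigl(f^{-1-1/\gamma^*}+1\bigr)$, which is uniformly bounded since $f\ge 1$. Consequently there exists $r_0\in(0,1)$, depending only on $\kappa_0$ and $\gamma$, such that $f$ is comparable to $f(x)$ (with absolute ratios) on $B_{4r_0\Gamma_x}(x)$ uniformly in $x\in\Rd$.

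To prove \cref{EL2.1A} (with this $r_0$ in the role of $r$), argue by contradiction. If the claim fails for every $\kappa>0$, continuity of $u$ produces $x_n\in\Rd$ and $y_n\in\overline{B_{r_0}}$ with
\begin{equation*}
u\bigl(x_n+\Gamma_{x_n}y_n\bigr) \,<\, \tfrac{1}{n}\,[f(x_n)]^{(\gamma^*-2)/\gamma^*}\,.
\end{equation*}
Since $u\ge 1$ and $(\gamma^*-2)/\gamma^*=(2-\gamma)/\gamma>0$, this forces $f(x_n)\to\infty$, and hence $\Gamma_{x_n}\to 0$. The rescaled functions
\begin{equation*}
u_n(y)\,\df\,[f(x_n)]^{(\gamma-2)/\gamma}\,u\bigl(x_n+\Gamma_{x_n}y\bigr)\,,
\qquad
f_n(y)\,\df\,\frac{f\bigl(x_n+\Gamma_{x_n}y\bigr)-\lambda}{f(x_n)}
\end{equation*}
satisfy $-\Delta u_n+\tfrac{1}{\gamma}\abs{Du_n}^\gamma=f_n$ on $B_{2r_0}$, with $u_n\ge 0$ and $u_n(y_n)<1/n$. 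The preceding comparability of $f$, together with \hyperlink{A1}{\sf{(A1)}}, implies that $\{f_n\}$ is uniformly bounded and uniformly Lipschitz on $B_{2r_0}$, and $f_n(0)=1-\lambda/f(x_n)\to 1$. Applying \cref{EC2.1B} in rescaled variables gives a uniform $L^\infty$ bound on $|Du_n|$ over $B_{2r_0}$, which combined with $u_n(y_n)\to 0$ yields a uniform $L^\infty$ bound on $u_n$ there. Standard interior Schauder estimates then extract a subsequence converging in $C^2_{\text{loc}}(B_{2r_0})$ to a nonnegative limit $u_\infty$ that solves $-\Delta u_\infty+\tfrac{1}{\gamma}\abs{Du_\infty}^\gamma=g$ on $B_{2r_0}$, with $g\ge 0$, $g(0)=1$, and $u_\infty(y_\infty)=0$ at an interior point $y_\infty\in\overline{B_{r_0}}$. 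Rewriting this equation as $-\Delta u_\infty+b\cdot Du_\infty\ge 0$ with the bounded drift $b\df\tfrac{1}{\gamma}\abs{Du_\infty}^{\gamma-2}Du_\infty$, the strong minimum principle forces $u_\infty\equiv 0$ on $B_{2r_0}$, contradicting $g(0)=1$.

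For \cref{EL2.1B}, apply \cref{EC2.1B} at $x$ with $r=r_0\Gamma_x$. Since $r^{-1/(\gamma-1)}=r_0^{-1/(\gamma-1)}[f(x)]^{1/\gamma}$ (using $\gamma^*(\gamma-1)=\gamma$) and $f$ is comparable to $f(x)$ on $B_{2r_0\Gamma_x}(x)$, one obtains $|Du(x)|\le C\,[f(x)]^{1/\gamma}$ once $f(x)$ exceeds a fixed threshold, and hence for all $x\in\Rd$ after enlarging $C$ (using the coercivity of $f$ and the local boundedness of $|Du|$ on $\{f\le\mathrm{const}\}$). Taking $y=0$ in \cref{EL2.1A} gives $u(x)\ge\kappa\,[f(x)]^{(2-\gamma)/\gamma}$, and combining the two bounds with the algebraic identity $2/\gamma-(2-\gamma)/\gamma=1$ yields $|Du(x)|^2/u(x)\le(C^2/\kappa)f(x)$, as claimed.

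The main obstacle is the blow-up analysis in (a): one must rescale \cref{EP} at a hypothetical bad point $x_n$ in such a way that the limiting right-hand side is both continuous and strictly positive at the blow-up origin. This is exactly what the calibration of the length scale $\Gamma_x=[f(x)]^{-1/\gamma^*}$ achieves in concert with \hyperlink{A1}{\sf{(A1)}}: the scaling normalizes $f(x_n)$ to unity, the growth restriction $\babs{Df}\lesssim f^{2-1/\gamma}$ is precisely sharp for the $\{f_n\}$ to stay uniformly Lipschitz (equivalently, for $f^{-1/\gamma^*}$ to be Lipschitz), and the limit $g$ necessarily satisfies $g(0)=1$, which drives the contradiction via the strong minimum principle.
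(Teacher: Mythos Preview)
Your argument is correct and follows the same rescaling-and-contradiction scheme as the paper for part~(a); part~(b) is handled in essentially the same way in both. The one genuine difference lies in how the contradiction is extracted once the rescaled problem on $B_{2r}$ is set up. The paper linearizes immediately, writing $-\Delta u_n+b_n\cdot Du_n=f_n$ with $b_n=\tfrac{1}{\gamma}\abs{Du_n}^{\gamma-2}Du_n$, shows that $b_n$, $f_n$ and $f_n^{-1}$ are uniformly bounded on $B_{2r}$, and then appeals directly to a quantitative Harnack-type lower bound for linear equations (\cite[Lemma~3.6]{AA-Harnack}) to rule out $\inf_{B_r}u_n\to 0$; no limit is taken. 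You instead keep the nonlinear equation, use a Schauder bootstrap and Arzel\`a--Ascoli to pass to a $C^2$ limit $u_\infty$ with $u_\infty(y_\infty)=0$, and apply the strong minimum principle to the limit. The paper's route is slightly more economical (no compactness machinery), while your blow-up template is equally standard and makes the role of the normalization $g(0)=1$ very transparent. Your preliminary observation that $f^{-1/\gamma^*}$ is globally Lipschitz under \hyperlink{A1}{\sf{(A1)}} is exactly the content of the paper's integrated estimate \cref{PL2.1D}--\cref{PL2.1E}, just packaged differently.
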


\begin{proof}
Note that by \hyperlink{A1}{\sf{(A1)}} there exists some $R>0$ such that
\begin{equation}\label{PL2.1A}
\abs{D f(x_n)} \,\le\, 2\kappa_0 \bigl[f(x_n)\bigr]^{\frac{\gamma^*+1}{\gamma^*}}
\qquad\forall\,x\in B_{R}^c\,.
\end{equation}
Choose $r$ positive and small enough such that
$r\le \frac{\gamma^*}{8\kappa_0}$.
We claim that the assertion in part (a) holds for this $r$.
To prove this, we use contradiction.
Suppose that
\begin{equation}\label{PL2.1B}
\inf_{y\in B_r}\, u(x_n+ \Gamma_{x_n} y)\bigl[f(x_n)\bigr]^{\frac{2-\gamma^*}{\gamma^*}}
\,\xrightarrow[n\to\infty]{}\,0
\end{equation}
along some sequence $\{x_n\}_{n\in\NN}\subset\Rd$, such that $\abs{x_n}\to\infty$
as $n\to\infty$.
We write \cref{EP} as
\begin{equation*}
-\Delta u(x) +  b(x)\cdot D{u}(x) \,=\,  f(x) - \lambda
\end{equation*}
with $b(x) \df \frac{1}{\gamma}\abs{D{u}}^{\gamma-2}D{u}(x)$.
Simplifying the notation, let
$\Gamma_n\equiv \Gamma_{x_n} = \bigl[f(x_n)\bigr]^{-\frac{1}{\gamma^*}}$, and
define the sequence of scaled functions
\begin{equation*}
u_n(y) \,\df\, \frac{u\bigl(x_n + \Gamma_n y\bigl)}
{\bigl[f(x_n)\bigr]^{\frac{\gamma^*-2}{\gamma^*}}}\,,\quad
b_n(y) \,\df\, \Gamma_n\, b\bigl(x_n + \Gamma_n y\bigl)\,,\quad\text{and\ \ }
f_n(y)\,\df\,\frac{f\bigl(x_n + \Gamma_n y\bigl) - \lambda}{f(x_n)}\,,
\end{equation*}
for $y\in \Rd$ and $n\in\NN$.
Then we obtain from \cref{EP} that
\begin{equation}\label{PL2.1C}
-\Delta u_n(y) +  b_n(y)\cdot D{u}_n(y) \,=\, f_n(y)\,.
\end{equation}

Integrating \cref{PL2.1A}, we obtain
\begin{equation}\label{PL2.1D}
\bigl[f(x_n+\Gamma_n y)\bigr]^{-\frac{1}{\gamma^*}} -
\bigl[f(x_n)\bigr]^{-\frac{1}{\gamma^*}}\,\ge\,
-\frac{2\kappa_0}{\gamma^*}\,\abs{y}\,\Gamma_n\,.
\end{equation}
Computing also the lower bound inherited from \cref{PL2.1A}
and combining it with \cref{PL2.1D}, we obtain
\begin{equation}\label{PL2.1E}
\bigl(\tfrac{2}{3}\bigr)^{\gamma^*}\,
f(x_n)  \,\le\, f(x_n+\Gamma_n y) \,\le\,
2^{\gamma^*} f(x_n) \quad\forall\, y\in B_{4r}\,,\quad\forall\,x_n\in
B_{R}^c\,.
\end{equation}
This shows that $f_n$ and $f_n^{-1}$ are bounded in $B_{4r}$ uniformly in $n\in\NN$.
To establish a bound for $b_n$ on $B_{2r}$,
it is enough to show that, for some constant $C$, we have
\begin{equation}\label{PL2.1F}
\abs{D{u}(x_n+\Gamma_n y)} \,\le\, C (1 + \Gamma_n^{1-\gamma^*})
\,=\, C \bigl(1 + \bigl[f(x_n)\bigr]^{\nicefrac{1}{\gamma}}\bigr)
\quad \forall\,y\in B_{2r}\,.
\end{equation}
By \cref{EC2.1B} we have
\begin{equation}\label{PL2.1G}
\begin{aligned}
\sup_{y\in B_{2r}}\,\abs{D{u} (x_n +\Gamma_n y)} &\,\le\,
C_0\,\biggl(\bigl(2\Gamma_n r\bigr)^{-\frac{1}{\gamma-1}} +
\sup_{y\in B_{4r}}\,\bigl(f(x_n+\Gamma_n y)-\lambda\bigr)_+^{\nicefrac{1}{\gamma}}\biggr)\\
&\,=\,
C_0\,\biggl((2r)^{-\frac{1}{\gamma-1}}
\bigl[f(x_n)\bigr]^{\frac{1}{\gamma}} +
\sup_{y\in B_{4r}}\,\bigl(f(x_n+\Gamma_n y)-\lambda\bigr)_+^{\nicefrac{1}{\gamma}}\biggr)
\,.
\end{aligned}
\end{equation}
Thus \cref{PL2.1F} follows by \cref{PL2.1E,PL2.1G}.

Therefore, since, as we have shown, $f_n$, $f_n^{-1}$ and $b_n$ are bounded in
$B_{2r}$ uniformly in $n\in\NN$, then,
by using for example \cite[Lemma~3.6]{AA-Harnack},
we see that equation \cref{PL2.1C} contradicts the
hypothesis in \cref{PL2.1B} that $\inf_{y\in B_r}\, u_n(y)\to0$.
This completes the proof of part (a).

Moving to part (b), let $r$ be as chosen in the proof of part (a).
We have shown above that 
\begin{equation}\label{PL2.1H}
\sup_{x\in\Rd}\,\sup_{y\in B_{4r}}\,
\frac{f(x + \Gamma_x y) - \lambda}{f(x)}\,<\infty\,.
\end{equation}
On the other hand, using the estimate \cref{PL2.1G}
on $B_{\nicefrac{r}{2}}$, with $r$ and $R$ as in part (a), we have
\begin{equation*}
\begin{aligned}
\frac{\abs{D{u}(x)}^2}{u(x) f(x)} &\,\le\,
\frac{C_0}{u(x) f(x)}
\biggl((\nicefrac{r}{2})^{-\frac{1}{\gamma-1}}\bigl[f(x)\bigr]^{\frac{1}{\gamma}}
+ \sup_{y\in B_{r}}\,\bigl(f(x+\Gamma_x y)-\lambda\bigr)_+^{\nicefrac{1}{\gamma}}\biggr)^2
\\
&\,\le\, 2 C_0 \Biggl((\nicefrac{r}{2})^{-\frac{1}{\gamma-1}}
\frac{\bigl[f(x)\bigr]^{\frac{\gamma^*-2}{\gamma^*}}}{u(x)}
+ \sup_{y\in B_{r}}\,
\frac{\bigl[f(x+\Gamma_x y)\bigr]^{\frac{\gamma^*-2}{\gamma^*}}}{u(x)} \times
\frac{f(x+\Gamma_x y)}{f(x)} \Biggr)
\quad \forall\,x\in\Rd\,.
\end{aligned}
\end{equation*}
Therefore, \cref{EL2.1B} follows by \cref{EL2.1A,PL2.1H}.
This completes the proof.
\end{proof}

\begin{remark}
The estimate in \cref{L2.1} is not suitable for the superquadratic case.
A different scaling can be used when $\gamma\ge2$.
First, we replace \hyperlink{A1}{\sf{(A1)}} by
\begin{equation}\label{ER2.1A}
\babs{D f(x)} \,\le\, \kappa_0\Bigl(1 +\abs{f(x)}^{\frac{4\gamma-3}{3\gamma-2}}\Bigr)
\qquad\forall\,x\in \Rd\,.
\end{equation}
Then, under \cref{ER2.1A}, we obtain
\begin{equation}\label{ER2.1B}
\inf_{y\in B_{r}}\, u\bigl( x+ \Gamma_x\, y) \,\ge\,
\kappa\, \bigl[f(x)\bigr]^{\frac{\gamma}{3\gamma-2}} \quad\forall\,x\in\Rd\,.
\end{equation}
for some positive constants $r$ and $\kappa$.
To prove this, we use $\Gamma_x = \bigl[f(x)\bigr]^{\frac{1-\gamma}{3\gamma-2}}$,
and follow the proof of \cref{L2.1}.
\end{remark}

To continue, we need the following notation.

\begin{notation}
For $r>0$, we let $\chi^{}_r$ be a concave $\Cc^2(\RR)$ function such that
$\chi^{}_r(t)= t$ for $t\le r$, and $\chi'_r(t) = 0$ for $t\ge 3r$.
Then $\chi'_r$ and $-\chi''_r$ are nonnegative, and the latter is supported
 on $[r,3r]$.
In addition, we select $\chi^{}_r$ so that
\begin{equation}\label{Echi}
\abs{\chi''_r(t)} \,\le\, \frac{1}{t}\qquad\forall t>0\,.
\end{equation}
This is always possible.
For example, we can specify $\chi''_r$ as
\begin{equation*}
\chi''_r(t) = \begin{cases}
\frac{t-r}{r^2} & \text{if\ }  r\le t \le \frac{3r}{2}\,,\\[3pt]
\frac{1}{2r} & \text{if\ }  \frac{3r}{2}\le t \le \frac{5r}{2}\,,\\[3pt]
\frac{3}{r} - \frac{t}{r^2} &\text{if\ }  \frac{5r}{2}\le t \le 3r\,.
\end{cases}
\end{equation*}
\end{notation}

Recall the definitions in \cref{EF,EMF,LP}.

\begin{lemma}\label{L2.2}
Assume \hyperlink{A1}{\sf{(A1)}}.
For any positive solution $u\in\Cc^2(\Rd)$
of \cref{EP} with eigenvalue $\lambda$  and $\mu\in\eom_F$, we have
\begin{equation}\label{EL2.2A}
\mu(F) -\lambda\,=\,
\int_{\Rd\times\Rd} \Bigl(\tfrac{1}{\gamma^*} \abs{\xi}^{\gamma^*}
-\xi\cdot D{u}(x)
+ \tfrac{1}{\gamma} \babs{ D{u}(x)}^\gamma\Bigr)\,\mu(\D x,\D\xi)\,\ge\,0\,,
\end{equation}
In particular, $\lambda\le\overline\lambda$.
\end{lemma}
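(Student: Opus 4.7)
The plan is to integrate \cref{EP} against $\mu$, formally with $u$ itself as test function in the definition of $\eom$. The motivating identity, obtained by substituting $-\Delta u$ from \cref{EP} into $\Ag u(x,\xi)=-\Delta u(x)+\xi\cdot D u(x)$, is
\[
F(x,\xi) - \lambda \,=\, \Ag u(x,\xi) + \Bigl(\tfrac{1}{\gamma^*}\abs{\xi}^{\gamma^*} + \tfrac{1}{\gamma}\abs{D u(x)}^{\gamma} - \xi\cdot D u(x)\Bigr),
\]
in which the parenthesized quantity is pointwise nonnegative by Young's inequality applied to the conjugate exponents $\gamma,\gamma^*$. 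Once $\int\Ag u\,d\mu=0$ is established, integrating this identity yields \cref{EL2.2A} together with the claimed nonnegativity. The main obstacle, around which the proof is built, is that $u$ is neither bounded nor compactly supported, so it is not immediately admissible as test function for $\eom$.

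I would overcome this via a double truncation. Let $\chi_r$ be the concave cutoff from the notation above, selected from a scaling family so that $\chi_r'$ is pointwise increasing in $r$, and let $\phi_R\in\Cc_c^\infty(\Rd)$ be a spatial cutoff equal to $1$ on $B_R$ and vanishing outside $B_{2R}$, with $\abs{D\phi_R}\le C/R$ and $\abs{\Delta\phi_R}\le C/R^2$. Since $\chi_r(u)\phi_R\in\Cc_c^2(\Rd)$, infinitesimal invariance of $\mu$ gives $\int\Ag(\chi_r(u)\phi_R)\,d\mu=0$. An application of the chain and product rules, together with \cref{EP} to eliminate $\Delta u$, shows that the \emph{bulk} contribution is $\phi_R\Ag\chi_r(u)$, with
\[
\Ag\chi_r(u) \,=\, \chi_r'(u)(F-\lambda) + \bigl(-\chi_r''(u)\bigr)\abs{D u}^{2} - \chi_r'(u)\Bigl(\tfrac{1}{\gamma^*}\abs{\xi}^{\gamma^*} + \tfrac{1}{\gamma}\abs{D u}^{\gamma} - \xi\cdot D u\Bigr),
\]
while the remainder $\Ag(\chi_r(u)\phi_R) - \phi_R\Ag\chi_r(u)$ is a sum of three \emph{boundary} terms proportional to $\chi_r(u)\,\xi\cdot D\phi_R$, $\chi_r(u)\Delta\phi_R$, and $\chi_r'(u)\,D u\cdot D\phi_R$.

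The next step is to send $R\to\infty$. The first two boundary terms are $O(1/R)$ uniformly in $R$, because $\chi_r(u)\le 3r$ and both $1$ and $\abs{\xi}$ are $\mu$-integrable (the latter since $\mu(F)<\infty$ forces $\int\abs{\xi}^{\gamma^*}\,d\mu<\infty$, whence $\int\abs{\xi}\,d\mu<\infty$ by H\"older). For the third, the crucial input is \cref{L2.1}(b): on the support of $\chi_r'(u)$ one has $u\le 3r$, so $\abs{D u}\le\sqrt{3rM_0 f}$, which is $\mu$-integrable by Cauchy--Schwarz and $\mu(f)<\infty$. Dominated convergence therefore eliminates all three boundary contributions and yields $\int\Ag\chi_r(u)\,d\mu=0$, i.e., after rearrangement,
\[
\int \chi_r'(u)(F-\lambda)\,d\mu + \int \bigl(-\chi_r''(u)\bigr)\abs{D u}^{2}\,d\mu \,=\, \int \chi_r'(u)\Bigl(\tfrac{1}{\gamma^*}\abs{\xi}^{\gamma^*} + \tfrac{1}{\gamma}\abs{D u}^{\gamma} - \xi\cdot D u\Bigr) d\mu.
\]

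Finally I would send $r\to\infty$. On the right-hand side, the integrand is nonnegative and $\chi_r'(u)\uparrow 1$ pointwise, so monotone convergence gives the limit $\int(\tfrac{1}{\gamma^*}\abs{\xi}^{\gamma^*} + \tfrac{1}{\gamma}\abs{D u}^{\gamma} - \xi\cdot D u)\,d\mu$. On the left-hand side, the first integrand is dominated by $F+\abs{\lambda}\in L^{1}(\mu)$, and dominated convergence gives the limit $\mu(F)-\lambda$; for the $\chi_r''$ integral, \cref{Echi} together with \cref{L2.1}(b) yield $\bigl(-\chi_r''(u)\bigr)\abs{D u}^{2}\le \abs{D u}^{2}/u\le M_0 f\in L^{1}(\mu)$, and since $\chi_r''(u)\to 0$ pointwise, dominated convergence sends this term to $0$. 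Combining these three limits proves \cref{EL2.2A}. The bound $\lambda\le\overline\lambda$ is then immediate upon taking the infimum over $\mu\in\eom$ (with the convention $\mu(F)=+\infty$ whenever $\mu\notin\eom_F$). The central difficulty throughout is the rigorous use of $u$ as a test function, for which the pointwise gradient bound in \cref{L2.1}(b) is the decisive ingredient.
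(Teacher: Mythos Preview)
Your proof is correct, but it works harder than necessary. The paper dispenses with the spatial cutoff $\phi_R$ entirely: by \cref{L2.1}(a) the solution $u$ is coercive, so $\chi_r(u)$ equals a constant outside a compact set, and $\chi_r(u)$ minus that constant already lies in $\Cc_c^2(\Rd)$. Thus the paper applies the definition of $\eom$ directly to $\chi_r(u)$ (shifted), and passes to the limit only in $r$, using \cref{Echi} together with \cref{L2.1}(b) exactly as you do for the $\chi''_r$ term. Your double truncation is a legitimate alternative that avoids invoking part~(a) of \cref{L2.1}; the price is the extra layer of boundary terms in $R$, which you handle correctly via the integrability of $\abs{\xi}$ and the bound $\abs{Du}\le\sqrt{3rM_0 f}$ on $\{u\le 3r\}$. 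Both routes hinge on \cref{L2.1}(b) for the $r\to\infty$ limit, but the paper's use of coercivity makes the argument noticeably shorter.
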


\begin{proof}
Since $u$ is coercive by \cref{L2.1}\,(a),
it follows that $\chi^{}_r(u)-r-1$ is compactly supported.
Thus we have
\begin{equation*}
\int \bigl(\Delta \chi^{}_r(u) - \xi\cdot D\chi^{}_r(u)\bigr)\,\D\mu
\,=\,0\qquad \forall\,\mu\in\eom\,,
\end{equation*}
by the definition of $\eom$.
On the other hand, we have
\begin{align*}
\Delta \chi^{}_r(u) - \xi\cdot D\chi^{}_r(u) &\,=\,
\chi''_r(u) \abs{ D{u}}^2
+ \chi'_r(u)\bigl(\Delta u -\xi\cdot D{u}\bigr)\\
&\,=\, \chi''_r(u) \abs{ D{u}}^2
+\chi'_r(u)\Bigl(\lambda + \tfrac{1}{\gamma} \abs{ D{u}}^\gamma - f
-\xi\cdot D{u}\Bigr)\\
&\,=\, \chi''_r(u) \abs{ D{u}}^2
+\chi'_r(u)\Bigl(\lambda - f-  \tfrac{1}{\gamma^*} \abs{\xi}^{\gamma^*}\Bigr)\\
&\mspace{200mu}+ \chi'_r(u)\Bigl(\tfrac{1}{\gamma^*} \abs{\xi}^{\gamma^*}
 -\xi\cdot D{u}
+ \tfrac{1}{\gamma} \abs{ D{u}}^\gamma\Bigr)\,.
\end{align*}
Therefore,
\begin{equation*}
\begin{aligned}
\int \chi'_r(u)\Bigl( f + \tfrac{1}{\gamma^*} \abs{\xi}^{\gamma^*}-\lambda\Bigr)\,\D\mu
&\,=\, \int \chi''_r(u) \abs{ D{u}}^2\,\D\mu\\
&\mspace{40mu}
+ \int \chi'_r(u)\Bigl(\tfrac{1}{\gamma^*} \abs{\xi}^{\gamma^*} -\xi\cdot D{u}
+ \tfrac{1}{\gamma} \abs{ D{u}}^\gamma\Bigr)\,\D\mu
\end{aligned}
\end{equation*}
for all $\mu\in\eom$.
By \cref{L2.1} we have $\frac{\abs{ D{u}}^2}{u}\le M_0 f$ for some
constant $M_0$.
Using this together with \cref{Echi}, we obtain
\begin{align*}
\int \chi''_r(u) \abs{ D{u}}^2\,\D\mu &\,\le\,
\int \Ind_{\{x\colon u(x)\ge\, r\}}  \frac{\babs{ D{u}(x)}^2}{u(x)}\,\D\mu\\
&\,\le\,
M_0 \int \Ind_{\{x\colon u(x)\ge\, r\}}\,f(x)\,\D\mu
\,\xrightarrow[r\to\infty]{}\,0\,,
\end{align*}
since $\int f\D\mu<\infty$ by the definition of $\eom_F$.
Thus letting $r\nearrow\infty$,
and applying the monotone convergence theorem, we obtain \cref{EL2.2A},
thus completing the proof.
\end{proof}

It is convenient to express the operator $\Ag$ in \cref{EAg} in terms of
a family of operators $\{\Lg_\xi\}_{\xi\in\Rd}$ defined by
\begin{equation}\label{ELg}
\Lg_\xi g(x) \,\df\, -\Delta g(x) + \xi\cdot D g(x)\,,\quad g\in\Cc^2(\Rd)
\end{equation}
It is clear from the Legendre--Fenchel transform
\begin{equation*}
\max_{\xi\in\Rd}\, \bigl(\xi\cdot p - \tfrac{1}{\gamma^*} \abs{\xi}^{\gamma^*}\Bigr)
\,=\, \tfrac{1}{\gamma} \abs{p}^{\gamma}\,,
\end{equation*}
that, for any solution $u$ of \cref{EP}, we have
\begin{equation*}
\max_{\xi\in\Rd}\, \Bigl[\Lg_\xi u(x) - \tfrac{1}{\gamma^*} \abs{\xi}^{\gamma^*}\Bigr]
\,=\, -\Delta u(x) + \frac{1}{\gamma} \babs{D u(x)}^\gamma
\,=\, f(x)-\lambda\,,
\end{equation*}
and that the maximum is realized at $\xi(x)=\abs{D u(x)}^{\gamma-2} D u(x)$.

The next lemma applies to any $\gamma>1$.

\begin{lemma}\label{L2.3}
Let $\gamma>1$.
Let $u$ be a coercive solution of \cref{EP} with eigenvalue $\lambda$.
Define
\begin{equation}\label{EL2.3A}
\xi_u(x)\,\df\, \abs{ D{u}(x)}^{\gamma-2}\, D{u}(x)\,.
\end{equation}
Then, there exists a Borel
probability measure $\nu_u$ on $\fB(\Rd)$, such that
\begin{equation}\label{EL2.3B}
\mu_u(\D{x},\D\xi)\,\df\, \nu_u(\D{x})\delta_{\xi_u(x)}(\D\xi) \,\in\,\eom\,,
\end{equation}
and
\begin{equation}\label{EL2.3C}
\int_{\Rd\times\Rd} \rc(x,\xi)\,\mu_u(\D x,\D\xi) \,\le\, \lambda\,.
\end{equation}
In particular, $\overline\lambda\le\lambda$.
\end{lemma}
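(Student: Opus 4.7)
The plan is to construct $\nu_u$ as an infinitesimally invariant probability measure for the linear operator $\Lg_{\xi_u}$ from \cref{ELg} (with the understanding that $\Lg_{\xi_u} g(x) = -\Delta g(x) + \xi_u(x)\cdot D g(x)$), and then to recover \cref{EL2.3C} via the same truncation device used in the proof of \cref{L2.2}. The key structural observation is that the Legendre--Fenchel pairing achieves equality at $\xi = \xi_u(x)$; namely,
\begin{equation*}
\xi_u(x)\cdot D u(x) - \tfrac{1}{\gamma^*}\abs{\xi_u(x)}^{\gamma^*}
\,=\, \tfrac{1}{\gamma}\abs{D u(x)}^{\gamma}\,.
\end{equation*}
Substituting this identity into \cref{EP} recasts the HJB equation as the linear identity
\begin{equation*}
\Lg_{\xi_u} u(x) \,=\, \rc\bigl(x,\xi_u(x)\bigr) - \lambda\,,\qquad x\in\Rd\,.
\end{equation*}

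Because $u$ is positive and coercive while $\rc(\,\cdot\,,\xi_u(\,\cdot\,)) \ge f$ is itself coercive, this identity is a Foster--Lyapunov condition for the operator $\Lg_{\xi_u}$. Elliptic regularity applied to \cref{EP} yields $u \in \Cc^{2,\alpha}_{\mathrm{loc}}(\Rd)$, so $\xi_u$ is at least locally H\"older continuous. An infinitesimally invariant probability measure $\nu_u$ for $\Lg_{\xi_u}$ can then be produced by standard means---for example, via a Has'minskii-type construction applied to the diffusion with generator $-\Lg_{\xi_u}$, whose non-explosion follows from It\^o's formula applied to $u$ together with $(-\Lg_{\xi_u})u = \lambda - \rc \le \lambda - 1$, and whose stationarity is extracted from the tightness of time averages secured by the Lyapunov function $u$. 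With $\mu_u$ as in \cref{EL2.3B}, for any $g\in\Cc_c^2(\Rd)$,
\begin{equation*}
\int_{\Rd\times\Rd}\Ag g\,\D\mu_u
\,=\, \int_{\Rd}\bigl[-\Delta g(x) + \xi_u(x)\cdot D g(x)\bigr]\,\nu_u(\D x)
\,=\, \int_{\Rd}\Lg_{\xi_u} g\,\D\nu_u
\,=\, 0\,,
\end{equation*}
which gives $\mu_u\in\eom$ and establishes \cref{EL2.3B}.

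For \cref{EL2.3C}, we apply the truncation $\chi_r$ to $u$. Since $u$ is coercive, $\chi_r(u) - r - 1 \in \Cc_c^2(\Rd)$, whence $\int \Lg_{\xi_u}\chi_r(u)\,\D\nu_u = 0$. A chain-rule computation using $\Lg_{\xi_u} u = \rc(\,\cdot\,,\xi_u) - \lambda$ gives
\begin{equation*}
\Lg_{\xi_u}\chi_r(u)
\,=\, -\chi''_r(u)\,\abs{D u}^2 + \chi'_r(u)\bigl[\rc(x,\xi_u(x)) - \lambda\bigr]\,.
\end{equation*}
Concavity of $\chi_r$ gives $\chi''_r \le 0$, and $0 \le \chi'_r \le 1$; integrating and rearranging therefore yields
\begin{equation*}
\int \chi'_r(u)\,\rc\bigl(x,\xi_u(x)\bigr)\,\D\nu_u
\,\le\, \lambda\int\chi'_r(u)\,\D\nu_u
\,\le\, \lambda\,.
\end{equation*}
Letting $r\nearrow\infty$ and invoking Fatou's lemma (using $\rc\ge 1$ and $\chi'_r(u)\nearrow 1$) delivers \cref{EL2.3C}, and $\overline\lambda\le\lambda$ follows at once. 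The main obstacle lies in the construction of $\nu_u$: for $\gamma\in(1,2)$ the drift $\xi_u = \abs{D u}^{\gamma-2}D u$ is only continuous (not Lipschitz) where $D u$ vanishes, so the classical invariant-measure existence results have to be invoked in their weak/martingale formulation or via a direct PDE construction on expanding balls; however, the coercivity of $f$ supplies ample Lyapunov margin and standard tools suffice, the remainder of the argument being a routine integration by parts and monotone passage to the limit.
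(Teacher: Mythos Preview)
Your proof is correct and follows essentially the same strategy as the paper: rewrite \cref{EP} as $\Lg_{\xi_u}u = F(\cdot,\xi_u) - \lambda$, use the coercivity of $u$ to produce an infinitesimally invariant probability measure $\nu_u$ for $\Lg_{\xi_u}$, and then recover \cref{EL2.3C} by the $\chi_r$ truncation and a monotone passage to the limit. The only difference is the source of $\nu_u$: the paper invokes the analytic result of Bogachev--R\"ockner for locally integrable drifts, whereas you use the probabilistic Has'minskii/Foster--Lyapunov construction---precisely the alternative the paper records in the remark immediately following the lemma.
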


\begin{proof}
Using the definition in \cref{ELg}, we write
\begin{equation}\label{PL2.3A}
\Lg_{\xi_u(x)}u(x)\,=\,
-\Delta u(x) + \abs{D u(x)}^{\gamma-2} D u(x) \,=\, F(x,\xi) - \lambda\,.
\end{equation}
Since $u$ is coercive, we can apply \cite[Theorem~1.2]{Bogachev-01d} to assert
the existence of a unique $\nu_u\in\cP(\Rd)$ which satisfies
\begin{equation}\label{PL2.3B}
\int_\Rd \Lg_{\xi_u(x)} f(x)\,\nu_u(\D{x})\,=\,0\qquad\forall\,f\in\Cc_c^2(\Rd)\,.
\end{equation}
Thus \cref{EL2.3B} follows from \cref{PL2.3B} and the definition of $\mu_u$,
whereas \cref{EL2.3C} follows by integrating \cref{PL2.3A} with respect to
$\nu_u$ using a cut-off function as in the proof of \cref{L2.2}, which
shows that $\int_\Rd \Lg_{\xi_u(x)}g(x)\,\D\mu_u\le0$ for any positive
function $g\in\Cc^2(\Rd)$.
\end{proof}

\begin{remark}
\Cref{L2.3} can also be established by a simple probabilistic argument.
Viewing \cref{EL2.3B} as a Foster--Lyapunov equation, it is well known
that the coercivity of $u$ implies that the diffusion with extended
generator $\Lg_{\xi_u(x)}$ is positive recurrent.
The measure $\nu_u$ can then be specified as the unique invariant probability
measure of this diffusion.
\end{remark}

We now discuss some properties of the set $\eom$ of infinitesimally
invariant measures which are needed for
the proof of \cref{T2.1} below.
It is clear that every $\mu\in\eom$ can be disintegrated into a
probability measure $\nu(\D{x})\in\cP(\Rd)$ and a Borel measurable probability kernel
$\eta(x,\D\xi)$ on $\Rd\times\fB(\Rd)$.
We denote this disintegration by $\mu=\nu\circledast \eta$.
For $\nu\circledast \eta\in\eom$, define
$\Bar\eta(x) \df \int_\Rd \xi\, \eta(x,\D\xi)$.
Then $\Bar\eta\colon\Rd\to\Rd$ is a Borel measurable map.
It is straightforward
to verify that $\nu\circledast \delta_{\Bar\eta}$ is in $\eom$.
Since, by convexity, we have
\begin{equation*}
\int_{\Rd\times\Rd}\abs{\xi}^\gamma \mu(\D{x},\D\xi)
\,\ge\, \int_\Rd\abs{\Bar\eta(x)}^\gamma\,\nu(\D{x})\,,
\end{equation*}
it is clear that the infimum in \cref{LP} is attained at
some $\mu\in\eom$  whose disintegration results in a kernel
$\eta(x,\,\cdot\,)$ which is Dirac for each $x\in\Rd$.
Then, $\eta$ can be represented as a Borel measurable map $v\colon\Rd\to\Rd$,
and vice-versa.
We denote the class of such measures as $\overline\eom$, and
abusing the notation we represent them as $\mu=\nu\circledast v$.
Consider such a $\mu=\nu\circledast v$ in $\overline\eom\cap\eom_F$.
It follows by \cref{EL2.2A} that $\int_\Rd \abs{v(x)}^{\gamma^*}\,\eta(\D{x})<\infty$.
Since $\gamma^*> 2$, this implies that $\int_\Rd \abs{v(x)}^2\,\eta(\D{x})<\infty$,
and thus $\nu$ has density $\varrho\in\Lp^{\nicefrac{d}{(d-1)}}(\Rd)$
with respect to the Lebesgue measure by  \cite[Theorem~1.1]{Bogachev-96}.

We continue with our main theorem.
Recall the definition of $\lamstr$ in \cref{E-lamstr},
and that $\eom_F^*$ denotes the subset of $\eom$ consisting of points
that attain the infimum in \cref{LP}.

\begin{theorem}\label{T2.1}
Assume \hyperlink{A1}{\sf{(A1)}}.  The following hold.
\begin{enumerate}
\item[\textup{(}a\textup{)}]
For any positive solution $u$ of \cref{EP} with eigenvalue $\lambda$ we have
\begin{equation*}
\mu_u(F) \,=\, \lambda \,=\, \overline\lambda \,=\,\lamstr\,,
\end{equation*}
with $\mu_u$ as in \cref{L2.3}.
\item[\textup{(}b\textup{)}]
The set $\eom_F^*$ is a singleton.
\item[\textup{(}c\textup{)}]
There exists a unique positive solution of \cref{EP}.
\end{enumerate}
\end{theorem}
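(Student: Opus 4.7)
The plan is to chain together \cref{L2.2,L2.3} to get (a), to convert the resulting identity into an equality case of Young's inequality that pins down the minimizer for (b), and to read off uniqueness for (c).

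For part (a), fix any positive solution $u$ with eigenvalue $\lambda$. By \cref{L2.2}, $\lambda\le\mu(F)$ for every $\mu\in\eom_F$, so $\lambda\le\overline\lambda$. On the other hand, \cref{L2.3} produces $\mu_u\in\eom$ with $\mu_u(F)\le\lambda$, so $\overline\lambda\le\mu_u(F)\le\lambda$. Hence $\lambda=\overline\lambda=\mu_u(F)$ and $\mu_u\in\eom_F^*$. To obtain $\overline\lambda=\lamstr$, note that $u$ is itself a subsolution so $\lambda\le\lamstr$. For the reverse inequality I would revisit the proof of \cref{L2.2}: the cut-off/Young argument there only uses that $v$ is a sufficiently regular subsolution of \cref{EP}, and yields $\lambda'\le\mu(F)$ for any such $v$ with eigenvalue $\lambda'$ and any $\mu\in\eom_F^*$; taking the supremum over $\lambda'<\lamstr$ gives $\lamstr\le\overline\lambda=\lambda$.

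For part (b), fix any positive solution $u$ and pick $\mu\in\eom_F^*$. Substituting $\mu(F)=\lambda$ from (a) into \cref{EL2.2A} gives
\begin{equation*}
\int_{\Rd\times\Rd}\Bigl(\tfrac{1}{\gamma^*}\abs{\xi}^{\gamma^*}-\xi\cdot D{u}(x)+\tfrac{1}{\gamma}\babs{D{u}(x)}^{\gamma}\Bigr)\,\mu(\D{x},\D\xi)\,=\,0\,.
\end{equation*}
The Legendre--Fenchel/Young inequality makes the integrand pointwise nonnegative, and its equality case forces $\xi=\abs{D{u}(x)}^{\gamma-2}D{u}(x)=\xi_u(x)$. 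Consequently $\mu$ is supported on the graph of the continuous map $\xi_u$, and disintegrating gives $\mu=\nu\circledast\delta_{\xi_u}$ for some $\nu\in\cP(\Rd)$. The infinitesimal invariance of $\mu$ then reduces to $\int\Lg_{\xi_u}g\,\D\nu=0$ for every $g\in\Cc^2_c(\Rd)$, and the uniqueness half of the Bogachev result invoked in \cref{L2.3} pins $\nu$ down as $\nu_u$. Hence $\mu=\mu_u$, and $\eom_F^*$ is the singleton $\{\mu_u\}$.

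For part (c), uniqueness up to an additive constant is immediate from (a) and (b): any two positive solutions $u_1,u_2$ produce $\mu_{u_i}\in\eom_F^*$ via \cref{L2.3}, but $\eom_F^*$ is a singleton, so $\mu_{u_1}=\mu_{u_2}$. Disintegrating forces $\xi_{u_1}=\xi_{u_2}$ on the support of the common marginal $\nu^*$. Since $\nu^*$ has a density in $\Lp^{\nicefrac{d}{(d-1)}}$ by the discussion preceding the theorem, and this density is strictly positive on $\Rd$ by Harnack for the adjoint Fokker--Planck equation with locally bounded drift (licensed by \cref{C2.1}), and since $p\mapsto\abs{p}^{\gamma-2}p$ is a homeomorphism on $\Rd$, we conclude $D{u_1}=D{u_2}$ a.e.\ and, by continuity, everywhere, so $u_1-u_2$ is constant. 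Existence of a positive solution under \hyperlink{A1}{\sf{(A1)}} is supplied by \cite{Barles-16}, with coercivity guaranteed by \cref{L2.1}\,(a).

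The main obstacle will be the equality-case step in (b): turning the pointwise Young equality into the statement that $\mu$ concentrates on a Borel graph, and extracting a measurable kernel of the form $\delta_{\xi_u(x)}$, needs the local boundedness and continuity of $\xi_u$ from \cref{C2.1} together with a careful disintegration. The strict positivity of the density of $\nu^*$ used in (c) is standard once the drift $\xi_u$ is locally bounded, but needs to be cited cleanly; and the identification $\overline\lambda=\lamstr$ in (a) is a routine duality exercise in comparison.
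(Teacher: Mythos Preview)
Your argument for part (b) is correct and takes a genuinely different---and cleaner---route from the paper. You substitute $\mu(F)=\overline\lambda$ into \cref{EL2.2A}, read the vanishing of the Young remainder as $\xi=\xi_u(x)$ for $\mu$-a.e.\ $(x,\xi)$, and conclude that any $\mu\in\eom_F^*$ is already supported on the graph of $\xi_u$; Bogachev's uniqueness then finishes. The paper instead restricts first to measures $\nu\circledast v$ with Dirac kernels, forms the convex combination $\Bar\nu\circledast\Bar v$ with the known minimizer $\nu_u\circledast\xi_u$, and uses strict convexity of $\abs{\,\cdot\,}^{\gamma^*}$ together with optimality to force $v=\xi_u$ on the support of $\nu$. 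Your approach avoids the auxiliary convex combination entirely and handles general $\mu\in\eom_F^*$ in one stroke; the paper's approach, by contrast, is phrased so as to transfer verbatim to the setting of \cref{T3.1}\,(c), where the Lagrangian $L(x,\xi)$ is only assumed strictly convex in $\xi$ and the explicit Young equality case is not available in closed form.

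There is one gap in part (a). Your route to $\lamstr\le\overline\lambda$ proposes to rerun the proof of \cref{L2.2} for an arbitrary subsolution $v$ with eigenvalue $\lambda'$. But that proof relies on \cref{L2.1}\,(a) (coercivity of $u$) to make $\chi_r(u)$ compactly supported, and on \cref{L2.1}\,(b) (the estimate $\abs{Du}^2/u\le M_0 f$) to kill the $\chi''_r$ term; both are proved for \emph{positive solutions} of \cref{EP}, not for general subsolutions, and there is no reason a subsolution realizing $\lambda'$ close to $\lamstr$ should be bounded below or coercive. The paper sidesteps this entirely: it quotes \cite[Theorem~2.6]{Barles-16} for the existence of a positive solution with eigenvalue $\lamstr$, and then your already-established identity $\lambda=\overline\lambda$ applied to that solution gives $\lamstr=\overline\lambda$ immediately. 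Your part (c) matches the paper's argument.
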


\begin{proof}
By \cref{L2.1} every positive solution of \cref{EP} is coercive.
Therefore, the first two equalities in part~(a) follow by \cref{L2.2,L2.3}.
By \cite[Theorem~2.6]{Barles-16}, there exists a solution with eigenvalue
$\lamstr$ which is bounded from below.
This of course implies $\overline\lambda=\lamstr$ thus completing the proof
of part~(a).

Let $\nu_u$ and $\xi_u$ be as in \cref{L2.3}.
Let $\mu=\nu\circledast v$ be any element of $\eom_F^*\cap\overline\eom$.
By the discussion in the paragraph preceding the theorem, $\nu$ has a density
$\varrho\in\Lp^{\nicefrac{d}{(d-1)}}(\Rd)$ with respect to the Lebesgue measure.
Let $\varrho_u$ denote the density of $\nu_u$, which, as well known, is strictly positive.
Let
\begin{equation*}
\zeta\,\df\,\frac{\rho}{\rho+\rho_u}\,,\quad
\zeta_u\df\frac{\rho_u}{\rho+\rho_u}\,,\quad
\Bar{v} \df \zeta v + \zeta_u \xi_u\,,\quad\text{and\ }
\Bar\nu\df\frac{1}{2}(\nu+\nu_u)\,.
\end{equation*}
It is straightforward to verify that
$\Bar\nu\circledast\Bar{v}\in\eom_\circ$.

By optimality, we have
\begin{equation}\label{PT2.1A}
\begin{aligned}
0 &\;\le\; \int_{\Rd} \abs{\Bar{v}}^{\gamma^*}\,\D\Bar\nu
-\frac{1}{2}\int_{\Rd} \abs{v}^{\gamma^*}\,\D\nu -\frac{1}{2}\int_{\Rd}
\abs{\xi_u}^{\gamma^*}\,\D\nu_u\\[3pt]
&\;=\; \int_\Rd \abs{\zeta\, v + \zeta_u\, \xi_u}^{\gamma^*}\,\D\Bar\nu
- \tfrac{1}{2}\,\int_\Rd \abs{v}^{\gamma^*}\, \D\nu
- \tfrac{1}{2}\,\int_\Rd \abs{\xi_u}^{\gamma^*}\, \D\nu_u \\[3pt]
&\;=\; \int_{\Rd}\Bigl(\abs{\zeta\, v + \zeta_u\, \xi_u}^{\gamma^*}
-\zeta\, \abs{v}^{\gamma^*} -\zeta_u\, \abs{\xi_u}^{\gamma^*}\Bigr)\, \D\Bar\nu
\,\le\,0
\end{aligned}
\end{equation}
by convexity.
Thus $\Bar\nu\circledast\Bar{v}\in\eom_F^*$.
Since $\rho_u$ is strictly positive, \eqref{PT2.1A} implies
that $v=\xi_u$ on the support of $\varrho$.
It is clear that if $v$ is modified outside the support of $\varrho$, then
the modified measure is also infinitesimally invariant for $\Ag$.
Therefore $\nu\circledast \xi_u\in\eom_F^*$.
The uniqueness of a probability measure satisfying \cref{PL2.3B}
then implies that $\nu=\nu_u$,
which in turn implies (since $v=\xi_u$ on the support of $\nu$)
that $v=\xi_u$ a.e.\ in $\Rd$.
This completes the proof of part (b).

Turning to part (c), existence of a positive
solution follows from \cite[Theorem~2.6]{Barles-16}.
By part (b), for any positive solutions $u$ and $w$, we have
$\xi_u = \xi_w$ a.e.\ in $\Rd$, implying that $D u=D w$ on $\Rd$.
Thus the solution is unique up to an additive constant.
This completes the proof.
\end{proof}

\section{More general Hamiltonians}\label{S3}

In this section we consider viscous equations 
taking the form
\begin{equation}\label{EP2}
-\Delta u(x) + H(x, D{u})  \,=\, f(x) - \lambda\,,
\end{equation}
with more general Hamiltonians $H$.
We adopt the following assumptions.

\medskip
\begin{itemize}
\item[\hypertarget{A2}{\sf{(A2)}}]
{\slshape The function $f$ is in $\Cc^2(\Rd)$ and is coercive.
The Hamiltonian $H$ satisfies the following.
\begin{enumerate}
\item[\textup{(i)}]
$H\in \Cc^2(\Rd\times\bigl(\Rd\setminus\{0\})\bigr)$,
and $p\mapsto H(x,p)$ is strictly convex for all $x\in\Rd$.
\item[\textup{(ii)}]
There exist constants $h_0>0$ and $\gamma>1$, such that
\begin{equation*}
h_0^{-1} \abs{p}^\gamma-h_0 \,\le\, H(x,p)\,\le\, h_0
\bigl(\abs{p}^\gamma+1\bigr)\,,
\qquad \babs{D_x H(x,p)} \,\le\, h_0^{-1} \bigl(1+\abs{p}^\gamma\bigr)\,.
\end{equation*}
\end{enumerate}
}
\end{itemize}
\medskip

The hypothesis \hyperlink{A2}{\sf{(A2)}}
is equivalent to \hyperlink{A2p}{\sf{(A2$^\prime$)}} below for the Lagrangian $L$,
which is related to $H$ via the Fenchel--Legendre transform, that is,
\begin{equation}\label{ELF2}
H(x,p) \,=\, \sup_{\xi\in\Rd}\, \bigl(\xi\cdot p - L(x,\xi)\bigr)\,.
\end{equation}

\medskip
\begin{itemize}
\item[\hypertarget{A2p}{\sf{(A2$^\prime$)}}]
{\slshape The function $f$ is in $\Cc^2(\Rd)$ and is coercive.
The Lagrangian $L$ satisfies the following.
\begin{enumerate}
\item[\textup{(i)}]
$L\in \Cc^2(\Rd\times\bigl(\Rd\setminus\{0\})\bigr)$,
and $\xi\mapsto L(x,\xi)$ is strictly convex for all $x\in\Rd$.
\item[\textup{(ii)}]
There exist constants $l_0>0$ and $\gamma^*>1$, such that
\begin{equation*}
l_0^{-1}  \abs{\xi}^{\gamma^*} - l_0 \,\le\, L(x,\xi) \,\le\,
l_0 \bigl(\abs{\xi}^{\gamma^*}+1)\,,
\qquad \babs{D_x L(x,\xi)} \,\le\, l_0^{-1} \bigl(1+\abs{\xi}^{\gamma^*}\bigr)\,.
\end{equation*}
\end{enumerate}
}
\end{itemize}
\medskip

In addition, under \hyperlink{A2}{\sf{(A2)}} or \hyperlink{A2}{\sf{(A2$^\prime$)}},
there exists positive constants
$h_1$ and $l_1$
such that
\begin{equation}\label{ELF3}
\begin{aligned}
h_1 \abs{p}^{\gamma-1} - h_1^{-1}
&\,\le\, \babs{D_p H(x,p)}\,\le\, h_1^{-1} \bigl(\abs{p}^{\gamma-1}+1\bigr)\,,\\
l_1 \abs{\xi}^{\gamma^*-1} - l_1^{-1} &\,\le\, \babs{D_\xi L(x,\xi)}\,\le\,
l_1^{-1} \bigl(\abs{\xi}^{\gamma^*-1}+1\bigr)\,,
\end{aligned}
\end{equation}
for all $(x,p,\xi)\in\RR^{3d}$, and
\begin{equation}\label{ELF4}
H(x,p) + L(x,\xi) \,\ge\, \xi\cdot p\,,
\end{equation}
with equality if and only if $\xi=D_p H(x,p)$ or $p=D_\xi L(x,\xi)$.

The model above is slightly more general than the model in \cite{Ichihara-12,Cirant-14}.
A more restrictive assumption on $H$ is used in \cite{Ichihara-12}, while
 $H$ does not depend on $x$ in \cite{Cirant-14}.
For the properties mentioned above see \cite[Theorem~3.4]{Ichihara-12}
and \cite[Proposition~4.1]{Cirant-14}.

\medskip

As mentioned earlier, \cite{Ichihara-12} imposes the assumptions in \cref{IH2} for $f$,
for both the subquadratic and superquadratic cases.
Barles in \cite{Barles-16} uses \cref{IH2}
only for the subquadratic case, while \cite{Cirant-14} does not
consider unbounded $f$ for the subquadratic case.
Analogous is the model in \cite[Section~4.6]{Ben-Fre}.

The results for this Hamiltonian are essentially the same as those in \cref{S2}.
We need the following ramification of \cite[Theorem~B.1]{Ichihara-12} analogous to
\cref{C2.1} valid for solutions of \cref{EP2}.

\begin{corollary}\label{C3.1}
Assume \hyperlink{A2}{\sf{(A2)}}.
Then, there exists a constant $C$ such that any solution $u$
of \cref{EP2} satisfies \cref{EC2.1A}.
\end{corollary}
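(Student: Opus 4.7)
The strategy mirrors the scaling argument in the proof of \cref{C2.1}, but now the $x$-dependence of $H$ forces me to track how the structural constants transform under scaling. Fix $x\in\Rd$ and $r\in(0,1]$, and define
\begin{equation*}
u_r(y)\,\df\, r^{\frac{2-\gamma}{\gamma-1}}\, u(x+ry), \quad
f_r(y)\,\df\, r^{\gamma^*}\bigl(f(x+ry)-\lambda\bigr), \quad
H_r(y,p)\,\df\, r^{\gamma^*} H\bigl(x+ry,\, r^{-\frac{1}{\gamma-1}}p\bigr).
\end{equation*}
Using $D u_r(y)=r^{\frac{1}{\gamma-1}}D u(x+ry)$ and $\Delta u_r(y)=r^{\gamma^*}\Delta u(x+ry)$, a direct computation shows that $u_r$ satisfies $-\Delta u_r(y)+H_r(y, D u_r(y))=f_r(y)$ on the unit ball.

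The next step, and the real content of the corollary, is to verify that the rescaled Hamiltonian $H_r$ satisfies \hyperlink{A2}{\sf{(A2)}} with constants independent of $r\in(0,1]$. Smoothness and strict convexity in $p$ are immediate from the definition, while \hyperlink{A2}{\sf{(A2)}}(ii) applied to $H$ yields the bounds
\begin{align*}
H_r(y,p) &\,\le\, r^{\gamma^*} h_0\bigl(r^{-\gamma^*}\abs{p}^\gamma+1\bigr)
\,\le\, h_0\bigl(\abs{p}^\gamma+1\bigr),\\
H_r(y,p) &\,\ge\, h_0^{-1}\abs{p}^\gamma - h_0\,r^{\gamma^*}
\,\ge\, h_0^{-1}\abs{p}^\gamma - h_0,\\
\babs{D_y H_r(y,p)} &\,=\, r^{\gamma^*+1}\babs{D_x H\bigl(x+ry,r^{-\frac{1}{\gamma-1}}p\bigr)}
\,\le\, r\, h_0^{-1}\abs{p}^\gamma + r^{\gamma^*+1} h_0^{-1}
\,\le\, h_0^{-1}\bigl(1+\abs{p}^\gamma\bigr),
\end{align*}
all of which rely only on $\gamma^*>0$ and $r\le 1$. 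In particular the structural constants of $H_r$ can be taken equal to $h_0$.

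With the uniform structure in hand I would apply \cite[Theorem~B.1]{Ichihara-12} to $u_r$ on $B_1$, which yields a constant $C$ depending only on $h_0$, $\gamma$, and $d$ such that
\begin{equation*}
\sup_{B_1}\,\abs{D u_r}
\,\le\, C\biggl(1 + \sup_{B_2}\,(f_r)_+^{\nicefrac{1}{\gamma}}
+ \sup_{B_2}\,\babs{D f_r}^{\nicefrac{1}{(2\gamma-1)}}\biggr).
\end{equation*}
Unscaling using $\abs{D u(x+ry)}=r^{-\frac{1}{\gamma-1}}\abs{D u_r(y)}$, $(f_r)_+^{\nicefrac{1}{\gamma}}(y)=r^{\frac{1}{\gamma-1}}(f(x+ry)-\lambda)_+^{\nicefrac{1}{\gamma}}$, and the identity $\frac{\gamma^*+1}{2\gamma-1}=\frac{1}{\gamma-1}$, which gives $\babs{D f_r(y)}^{\nicefrac{1}{(2\gamma-1)}}=r^{\frac{1}{\gamma-1}}\babs{D f(x+ry)}^{\nicefrac{1}{(2\gamma-1)}}$, and dividing through by $r^{\frac{1}{\gamma-1}}$, produces \cref{EC2.1A} for $r\in(0,1]$. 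For $r>1$ the estimate follows by covering $B_r(x)$ with unit balls and applying the case $r=1$, since $r^{-\frac{1}{\gamma-1}}\le 1$ and the sup terms only grow. The principal obstacle is the bookkeeping in verifying that the structural constants of $H_r$ are genuinely independent of $r$; once this is established, the rest is the same mechanical scaling computation as in \cref{C2.1}.
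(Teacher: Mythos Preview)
Your proposal is correct and follows essentially the same scaling argument as the paper's proof. The only minor difference is that the paper extracts the abstract implication \cref{EC3.1A}\,$\Rightarrow$\,\cref{EC3.1B} from the \emph{proof} of \cite[Theorem~B.1]{Ichihara-12} and verifies \cref{EC3.1A} for the scaled function directly (using also \cref{ELF3}), rather than checking that the rescaled Hamiltonian $H_r$ satisfies \hyperlink{A2}{\sf{(A2)}} uniformly in $r\in(0,1]$ and invoking the theorem as a black box.
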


\begin{proof}
A closer inspection of the proof of \cite[Theorem~B.1]{Ichihara-12} reveals that the
following is established.
Let $g\in\Cc^2(\Rd)$ be a coercive function.
There exists a function $C\colon (0,\infty)^2\to(0,\infty)$ such that
if $\varphi\in\Cc^3(\Rd)$ satisfies
\begin{equation}\label{EC3.1A}
\begin{aligned}
-D (\Delta \varphi) &\,\le\, c_1\bigl(1 + \abs{D \varphi}^\gamma + \abs{D g}\bigr)\,,\\
\abs{D\varphi}^\gamma &\,\le\, c_2\bigl(\abs{\Delta\varphi} + \abs{g}\bigr)\,,
\end{aligned}
\end{equation}
for a pair of positive constants $(c_1,c_2)$, then
\begin{equation}\label{EC3.1B}
\sup_{y\in B_1(x)}\,\abs{D{u}(y)} \,\le\, C(c_1,c_2)\biggl(1 +
\sup_{y\in B_2(x)}\,\bigl(g(y)\bigr)_+^{\nicefrac{1}{\gamma}}
+ \sup_{y\in B_2(x)}\,\abs{D g(y)}^{\nicefrac{1}{(2\gamma-1)}}\biggr)
\qquad \forall\,x\in\Rd\,.
\end{equation}
We use scaling.  With $u$ a solution of \cref{EP2}, we define
$u_r(y)\df r^{\frac{2-\gamma}{\gamma-1}} u(x+ry)$.
Using \hyperlink{A2}{\sf{(A2)}}\,(ii) and \cref{ELF3}, we deduce
that $u_r$ and $g\equiv r^{\gamma^*}\bigl(f(x+ry) - \lambda\bigr)$ satisfy \cref{EC3.1A}
for all $r\in(0,1]$ and for constants $c_1$ and $c_2$ which do not depend on $r$.
The result then follows by \cref{EC3.1B}.
\end{proof}

For the model in \cref{EP2}, we define
\begin{equation}\label{EMF2}
F(x,\xi) \,\df\, f(x) + L(x,\xi)\,,
\end{equation}
and $\eom_F$ and $\overline\lambda$ as in \cref{EMF}
and \cref{LP}, respectively, relative to $F$ in \cref{EMF2}.
We also let
\begin{equation}\label{Exiu}
\xi_u(x)\,\df\, D_p H(x,p)\,.
\end{equation}
Recall that $\eom_F^*$ is the set of measures in $\eom$ that attain
the infimum in \cref{LP}.

\begin{theorem}\label{T3.1}
Assume \hyperlink{A1}{\sf{(A1)}}--\hyperlink{A2}{\sf{(A2)}} and $\gamma\in(1,2)$.
Then
\begin{enumerate}
\item[\textup{(}a\textup{)}]
The conclusions of \cref{L2.1} hold.
\item[\textup{(}b\textup{)}]
For any positive solution $u\in\Cc^2(\Rd)$
of \cref{EP2} with eigenvalue $\lambda$  and $\mu\in\eom_F$, we have
\begin{equation}\label{ET3.1A}
\mu(F) -\lambda\,=\,
\int_{\Rd\times\Rd} \Bigl(L(x,\xi) -\xi\cdot D{u}(x)
+ \tfrac{1}{\gamma} \babs{ D{u}(x)}^\gamma\Bigr)\,\mu(\D x,\D\xi)\,\ge\,0\,,
\end{equation}
there exists a Borel
probability measure $\nu_u$ on $\fB(\Rd)$, such that, with
$\xi_u$ as defined in \cref{Exiu}, we have
\begin{equation}\label{ET3.1B}
\mu_u(\D{x},\D\xi)\,\df\, \nu_u(\D{x})\delta_{\xi_u(x)}(\D\xi) \,\in\,\eom\,,
\end{equation}
and
\begin{equation}\label{ET3.1C}
\int_{\Rd\times\Rd} \rc(x,\xi)\,\mu_u(\D x,\D\xi) \,=\, \overline\lambda\,.
\end{equation}
In particular, $\lambda=\overline\lambda$.
\item[\textup{(}c\textup{)}]
We have $\eom_F^*=\{\mu_u\}$, with $\mu_u$ as in \cref{ET3.1B}.
\item[\textup{(}d\textup{)}]
There exists at most one positive solution of \cref{EP2}.
\end{enumerate}
\end{theorem}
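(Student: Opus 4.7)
The plan is to mirror the proofs of \cref{L2.1,L2.2,L2.3,T2.1}, replacing the explicit nonlinearity $\tfrac{1}{\gamma}\abs{p}^\gamma$ by the general Hamiltonian $H(x,p)$ and its Legendre dual $L(x,\xi)$, invoking the gradient estimate \cref{C3.1} in place of \cref{C2.1}, the Fenchel inequality \cref{ELF4} in place of elementary manipulations with $\abs{\cdot}^{\gamma^*}$, and the strict convexity granted by \hyperlink{A2}{\sf{(A2)}}\,(i) at the uniqueness stage. Throughout, the only structural features of the model \cref{EP} that were used in \cref{S2} are the two-sided growth bounds on $H$ and on $\abs{D_pH}$, which are precisely \hyperlink{A2}{\sf{(A2)}}\,(ii) and \cref{ELF3}.

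For (a), the scaling argument of \cref{L2.1} carries over verbatim: one writes \cref{EP2} as $-\Delta u + b(x)\cdot Du = f-\lambda$ with $b(x)\df \int_0^1 D_pH(x,tDu(x))\,\D{t}$, so that $b(x)\cdot Du(x) = H(x,Du(x))-H(x,0)$ and \cref{ELF3} supplies $\abs{b}\le C(\abs{Du}^{\gamma-1}+1)$; this is all that was used to control the scaled drift in \cref{PL2.1F,PL2.1G} before invoking \cite[Lemma~3.6]{AA-Harnack}. For (b), the cutoff computation of \cref{L2.2}, after substituting $\Delta u = H(x,Du)+\lambda-f$ from \cref{EP2}, yields
\begin{equation*}
\int \chi'_r(u)(F-\lambda)\,\D\mu \,=\, \int \chi''_r(u)\abs{Du}^2\,\D\mu
+ \int \chi'_r(u)\bigl(L(x,\xi)+H(x,Du)-\xi\cdot Du\bigr)\,\D\mu\,,
\end{equation*}
in which the first term on the right vanishes as $r\to\infty$ by part (a) and \cref{Echi}, and the second is nonnegative by \cref{ELF4}. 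With $\xi_u(x)\df D_pH(x,Du(x))$, the equality clause of \cref{ELF4} rewrites \cref{EP2} as $\Lg_{\xi_u}u = F(x,\xi_u)-\lambda$; applying \cite[Theorem~1.2]{Bogachev-01d} to the coercive $u$ produces a unique $\nu_u\in\cP(\Rd)$ with $\mu_u\df\nu_u\circledast\delta_{\xi_u}\in\eom$, and a second cutoff argument integrating this equation against $\nu_u$ yields $\mu_u(F)\le\lambda$. Combining the two halves gives $\lambda=\overline\lambda$ and \crefrange{ET3.1A}{ET3.1C}.

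For (c), I would replay the convex-combination argument in the proof of \cref{T2.1}\,(b), replacing $\tfrac{1}{\gamma^*}\abs{\cdot}^{\gamma^*}$ everywhere by $L(x,\cdot)$. Jensen's inequality for the strictly convex $\xi\mapsto L(x,\xi)$ first reduces to minimizers of the form $\nu\circledast v\in\overline\eom$; since $\gamma^*>2$ and $\int\abs{v}^{\gamma^*}\D\nu<\infty$ by \hyperlink{A2p}{\sf{(A2$^\prime$)}}\,(ii), \cite[Theorem~1.1]{Bogachev-96} yields a density $\varrho\in\Lp^{\nicefrac{d}{(d-1)}}(\Rd)$ for $\nu$. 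Forming $\Bar\nu\circledast\Bar v$ exactly as in \cref{PT2.1A}, the strict convexity of $L(x,\cdot)$ forces $v=\xi_u$ on $\{\varrho>0\}$, and the uniqueness clause of \cite[Theorem~1.2]{Bogachev-01d} then gives $\nu=\nu_u$. Part (d) is immediate: for two positive solutions $u,w$, part (c) gives $\xi_u=\xi_w$ a.e., and since $p\mapsto D_pH(x,p)$ is injective by strict convexity (with inverse $D_\xi L(x,\cdot)$ on its image), we obtain $Du=Dw$, so that $u$ and $w$ differ by a constant.

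The most delicate point is the passage from $\mu(F)<\infty$ to the $\Lp^{\nicefrac{d}{(d-1)}}$-density estimate used in part (c): it requires $L^2$-integrability of $v$ under $\nu$, which in turn hinges on $\gamma^*>2$ together with the lower bound $L(x,\xi)\ge l_0^{-1}\abs{\xi}^{\gamma^*}-l_0$ from \hyperlink{A2p}{\sf{(A2$^\prime$)}}\,(ii). This is the structural feature that confines the present approach to the subquadratic regime; the superquadratic case would require an alternative route for reducing to Dirac kernels.
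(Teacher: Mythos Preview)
Your proposal is correct and follows essentially the same route as the paper's proof: the same scaling/Harnack argument for (a), the same cutoff identity $\int\chi'_r(u)(F-\lambda)\,\D\mu = \int\chi''_r(u)\abs{Du}^2\,\D\mu + \int\chi'_r(u)\bigl(L(x,\xi)+H(x,Du)-\xi\cdot Du\bigr)\,\D\mu$ and appeal to \cite{Bogachev-01d} for (b), and the same convex-combination argument from \cref{T2.1}\,(b) for (c). The only cosmetic differences are that in (a) the paper uses the drift $\xi_u=D_pH(x,Du)$ and the supersolution inequality $-\Delta u_n+\xi_{n,u}\cdot Du_n\ge f_n$ rather than your integral-mean drift $b=\int_0^1 D_pH(x,tDu)\,\D t$, and in (c) the paper remarks that one may use Radon--Nikodym derivatives of $\nu$, $\nu_u$ with respect to $\Bar\nu$ instead of Lebesgue densities, which avoids invoking \cite{Bogachev-96} altogether; since the theorem assumes $\gamma\in(1,2)$, your density-based argument is entirely adequate.
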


\begin{proof}
Part (a) follows as in \cref{L2.1} with a slight modification.
Instead of \cref{PL2.1C}, we use the inequality
\begin{equation*}
-\Delta u_n(y) +  \xi_{n,u}(y)\cdot D{u}_n(y) \,\ge \, f_n(y)\,,
\end{equation*}
with $\xi_{n,u}(y) \,\df\, \Gamma_n\, \xi_u\bigl(x_n + \Gamma_n y\bigl)$, and 
$\xi_u$ as in \cref{Exiu}.
Then we apply  \cref{ELF3} and \cref{C3.1}, and follow
the proof of \cref{L2.1}.

For part (b), we define
\begin{equation*}
G(x,\xi,p) \,\df\, H(x,p) -  \xi\cdot p + L(x,\xi)\,\ge\,0\,,
\end{equation*}
and write
\begin{align*}
\Delta u(x) -\xi\cdot  D{u}(x) &\,=\,
\lambda + H(x,D{u}) - f(x) - \xi\cdot  D{u}(x)\\
&\,=\, \bigl(\lambda - L(x,\xi)  - f(x)\bigr) + G(x,\xi,D{u})\,,
\end{align*}
and following the proof of \cref{L2.2}, we obtain
\begin{equation*}
\int_{\Rd\times\Rd} \bigl(L(x,\xi)+f(x)\bigr)\,\mu(\D x,\D\xi) -\lambda\,=\,
\int_{\Rd\times\Rd} G\bigl(x,\xi,D{u}(x)\bigr)\,\mu(\D x,\D\xi)\,\ge\,0\,.
\end{equation*}
The remaining assertions in (b) follow by \cref{L2.3}, with $\xi_u$ as
defined in \cref{Exiu}.

Part (c) follows as in \cref{L2.3} with a slight difference.
For $\gamma>2$ we don't know a priori that $\abs{\xi}^2$ is integrable under
a measure in $\eom_F$.
So instead of the densities $\zeta$ and $\zeta_u$ we use the Radon--Nikodym
derivatives.

Part (d) follows from parts (b) and (c).
This concludes the proof.
\end{proof}

\begin{remark}
\cref{T3.1} does not address existence of a positive solution to \cref{EP2}.
For Hamiltonians not depending on $x$, existence is asserted in \cite{Cirant-14}.
In general, under some additional assumptions,
we can show that there exists a positive solution to \cref{EP2}. 
In addition to \hyperlink{A1}{\sf{(A1)}}--\hyperlink{A2}{\sf{(A2)}},
we also assume that for any bounded $C^2$ domain $D$, there exists a constant $\beta>0$ 
satisfying the following: for every $\varepsilon>0$ there exists $\delta>0$ such that
\begin{equation}\label{ER3.1A}
\abs{H(x, \xi)-\beta |\xi|^\gamma}\leq \varepsilon \left(\abs{\xi}^\gamma
+ \bigl(\dist(x, \partial D)\bigr)^{-\gamma^*}\right), \quad \text{whenever\ }
\dist(x, \partial D)<\delta \text{\ and\ } \xi\in\Rd\,.
\end{equation}
This is same as \cite[(2.23)]{LP16}.
It then follows from \cite[Theorem~2.15]{LP16}
that there exists  a unique 
$v_D\in C^2(D)$ and a constant $c_D$ satisfying
\begin{equation}\label{ER3.1B}
-\Delta v_D(x) + H(x, D v_D(x))  \,=\,
f(x) - c_D\quad \text{in}\; D, \quad \text{and}\quad \lim_{x\to\partial D} v_D
\,=\,\infty\,.
\end{equation}
Furthermore, $c_D$ is characterized as follows:
\begin{equation*}
c_D \,=\, \sup\,\bigl\{c\in\RR\,\colon \exists\; v\in \Sob^{1, 2}(D)\cap \Lp^\infty(D)
\text{\ such that\ } -\Delta v(x) + H(x, D v(x))-f(x) + c\leq 0\bigr\}\,.
\end{equation*}
Thus $c_D$ is monotone decreasing as a function of $D$.
Denote by $(v_n,c_n)$ the solution pair of \cref{ER3.1B} corresponding to $D=B_n$, and
let $x_n\in \Argmin v_n$.
It follows from the equation above that
\begin{equation*}
-c_n -H(x_n, 0) + f(x_n)\,=\,-\Delta v_n(x_n) \,\le\, 0\,,
\end{equation*}
which implies that
\begin{equation*}
c_n \,\ge\, - \min_{\Rd}\, (f-h_0)^-\,.
\end{equation*}
Let $c=\lim_{n\to\infty} c_n$ which exists by the above estimate.
It also clear that $v_n$ attains its minimum in a compact set independent of $n$.
Thus we can follow a standard argument (see \cite[Theorem~2.6]{Barles-16})
to show that $v_n-\min_\Rd v_n\to v$ as $n\to\infty$, and 
\begin{equation*}
-\Delta v(x) + H(x, D v(x))  \,=\, f(x) - c\quad \text{in\ } \Rd\,.
\end{equation*}

Assumption \eqref{ER3.1A} is satisfied by a large class of Hamiltonian.
For instance, consider 
\begin{equation*}
H(x, \xi) \,=\, b(x)\cdot \xi + \frac{1}{\gamma} |\xi|^\gamma
\end{equation*}
for some bounded function $b$.
Then we can choose $\beta= \frac{1}{\gamma}$ above.
Note that \eqref{ER3.1A} follows from the estimate below. 
\begin{align*}
b(x)\cdot \xi &\,\le\, \varepsilon \abs{\xi}^\gamma + C \varepsilon^{-\frac{1}{\gamma-1}}
\\
&\,\le\, \varepsilon \left(\abs{\xi}^\gamma
+ C \varepsilon^{-\frac{\gamma}{\gamma-1}}\right)
\end{align*}
where the constant $C$ depends on $\norm{b}_\infty$ and $\gamma$.
Now choose $\delta = C^{-\frac{\gamma-1}{\gamma}} \varepsilon$.
\end{remark}

\subsection{Remarks on the superquadratic case}\label{S3.1}

Cirant in \cite{Cirant-14} is adopting \hyperlink{A2}{\sf{(A2)}}, except
that in his model the Hamiltonian does not depend on $x$, that is, $H(x,p)=H(p)$.
He also assumes that $f$ and $Df$ have at most polynomial growth.
He shows that there always exists a positive solution to \cref{EP2},
and that this has a at least linear growth.

For this model we can establish that $\int\abs{u}^2\,D\mu<\infty$ for
all $\mu\in\eom_F$, and that therefore, \cref{ET3.1A} holds.
However, the proof of this differs from the proof of \cref{L2.2}.
We choose instead a smooth concave function $\chi$ such that
$\chi(s)= s$ for $s\le0$, and $\chi(s) = 1$ for $s\ge 2$, and we scale it
by defining $\chi^{}_t(s) \df t + \chi(s-t)$ for $t\in\RR$.
Since $\gamma\ge 2$, and $Du$ has polynomial growth, while
$u$ has at least linear growth, we can follow the argument
in the proof of \cite[Theorem~4.1]{ABBK} to conclude that 
$\int\abs{u}^2\,D\mu<\infty$ for all $\mu\in\eom_F$.
In \cite{Cirant-14}, the set of admissible controls are required to
satisfy $\limsup_{T\to\infty}\,\frac{\Exp^x[u(X_t)]}{T}=0$.
This is an unnecessary restriction on the class of admissible controls,
and can be avoided.
Without assuming that $H(p)$ is strictly convex, which might result in
non-uniqueness for $u$,
the approach summarized above,
shows that $\xi_u$ is an optimal Markov control and the corresponding infinitesimal
measure is a minimizer of \cref{LP}.
Thus, we have a strong notion of optimality as explained in \cref{S4}.
Under the additional assumption that $H(p)$ is strictly convex, the positive
solution $u$, and therefore also the optimal Markov control are unique.

%
%
%

\section{Implications for the ergodic control problem}\label{S4}

The problem \cref{EP} is associated with an ergodic control problem
for the diffusion $X=(X_t)_{t\ge0}$ given by the It\^o stochastic
differential equation
\begin{equation}\label{SDE}
\D{X_t} \;=\;
-\xi_t\,\D{t} + \sqrt{2}\,\D{W_t} \,, \quad X_0 = x\in \RR^d\,.
\end{equation}
This equation is specified on a complete, filtered probability space
$\bigl(\Omega,\sF,\Prob, (\sF_t)_{t\ge0}\bigr)$, with $(W_t)_{t\ge0}$
an $(\sF_t)$-adapted $d$-dimensional Brownian motion.
An \emph{admissible control} is an $\Rd$-valued $(\sF_t)$-progressively measurable
process $\xi=(\xi_t)_{t\ge0}$, such that
$\Exp\Bigl[\int_0^T \abs{\xi_t}^{\gamma^*}\,\D{t}\Bigr] < \infty$ for all $T>0$,
and we let $\Uadm$ denote the class of such controls.
The \emph{running cost} function is given by $F$ in \cref{EF}.

In \cite{Ichihara-12}, optimality is established via the
study of the parabolic problem.
In view of the optimality results concerning \cref{LP}, we
can state a stronger version of optimality.
We state this result for the model in \cref{EP}, noting that
an identical argument can be used to establish this for
\cref{EP2} under \hyperlink{A2}{\sf{(A2)}}.
We let $\Exp^x_\xi$ denote the expectation operator for the diffusion
in \cref{SDE} controlled by $\xi\in\Uadm$ with initial condition $X_0=x$.

\begin{theorem}\label{T4.1}
Assume \hyperlink{A1}{{\sf(A1)}}.
Let $u\in\Cc^2(\Rd)$ be the unique positive solution of \cref{EP}
in the subquadratic
case, or as in \cref{S3.1} for the superquadratic case.
With $\xi_u$ as in \cref{EL2.3A}, we have
\begin{equation}\label{ET4.1A}
\liminf_{T\to\infty}\,\inf_{\xi\in\Uadm}\,
\Exp^x_{\xi}\biggl[\int_0^T F(X_t,\xi_t) \,\D{t}\biggr]\,\ge\, \overline\lambda
\,=\,
\limsup_{T\to\infty}\,
\Exp^x_{\xi_u}\biggl[\int_0^T F\bigl(X_t,\xi_u(X_t)\bigr) \,\D{t}\biggr]\,.
\end{equation}
Moreover, \cref{ET4.1A} holds without the expectation operators in the a.s.\ 
pathwise sense.
In addition a Markov control $v\colon\Rd\to\Rd$ is optimal, if and only
if it agrees with $\xi_u$ a.e.\ in $\Rd$.
\end{theorem}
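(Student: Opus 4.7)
The plan is a classical verification argument. Apply It\^o's formula to the unique positive solution $u$ of \cref{EP} along the controlled diffusion \cref{SDE}, substitute $\Delta u = \tfrac{1}{\gamma}\abs{Du}^\gamma - f + \overline\lambda$ from \cref{EP}, and rearrange with the help of the Fenchel inequality $\xi\cdot p\le\tfrac{1}{\gamma^*}\abs{\xi}^{\gamma^*}+\tfrac{1}{\gamma}\abs{p}^\gamma$, which becomes an equality precisely when $\xi=\abs{p}^{\gamma-2}p$, that is, when $\xi_s=\xi_u(X_s)$. This yields the fundamental identity
\begin{equation*}
u(X_t) - u(x) \,=\, \int_0^t\bigl(\overline\lambda - F(X_s,\xi_s)\bigr)\,\D s + \int_0^t G(X_s,\xi_s)\,\D s + M_t\,,
\end{equation*}
with $G(x,\xi)\df\tfrac{1}{\gamma^*}\abs{\xi}^{\gamma^*} - \xi\cdot Du(x) + \tfrac{1}{\gamma}\abs{Du(x)}^\gamma\ge 0$ (equality iff $\xi=\xi_u(x)$) and $M_t\df\sqrt{2}\int_0^t Du(X_s)\cdot\D W_s$ a continuous local martingale. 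This identity is the backbone of all the assertions.

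For the upper bound and its pathwise counterpart, set $\xi=\xi_u$: the $G$-integrand vanishes and the identity becomes an \emph{equality}. The closed-loop diffusion with drift $-\xi_u$ is positive recurrent with unique invariant probability measure $\nu_u$ from \cref{L2.3}, and $\mu_u(F)=\overline\lambda$ by \cref{T2.1}\,(a); Birkhoff's ergodic theorem then gives $\tfrac{1}{T}\int_0^T F(X_s,\xi_u(X_s))\,\D s\to\overline\lambda$ both a.s.\ and in $L^1$. For the lower bound (normalized by $T$, in both the expectation and pathwise senses), work with the empirical occupation measures $\pi_T\df\tfrac{1}{T}\int_0^T\delta_{(X_s,\xi_s)}\,\D s$. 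Coercivity of $f$ yields tightness of $\{\pi_T\}$ on the event that the normalized running cost stays finite, and applying It\^o to arbitrary $g\in\Cc^2_c(\Rd)$ gives
\begin{equation*}
\int\Ag g\,\D\pi_T \,=\, \tfrac{1}{T}\bigl(g(X_T)-g(x)\bigr) - \tfrac{\sqrt{2}}{T}\int_0^T Dg(X_s)\cdot\D W_s\,,
\end{equation*}
whose right-hand side tends to $0$ in $L^1$ (since $g$ is bounded and $Dg$ is compactly supported) and a.s.\ (by the strong law for continuous martingales). Hence every weak limit of $\{\pi_T\}$ lies in $\eom$, and lower semicontinuity of $\mu\mapsto\mu(F)$ combined with the definition of $\overline\lambda$ in \cref{LP} forces $\liminf_T\tfrac{1}{T}\int_0^T F(X_s,\xi_s)\,\D s\ge\overline\lambda$ along any admissible $\xi$.

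For the characterization of optimal Markov controls, the occupation-measure analysis applied to an optimal Markov $v\colon\Rd\to\Rd$ produces $\mu_v\df\nu_v\circledast v\in\eom_F^*$, whence $\mu_v=\mu_u$ by \cref{T2.1}\,(b); strict positivity of the density of $\nu_u$ then forces $v=\xi_u$ a.e.\ on $\Rd$, while the converse is immediate from the upper-bound step. I expect the main technical hurdle to lie in the a.s.\ version of the occupation-measure argument uniformly over general, possibly non-Markov, admissible controls: specifically, one must apply the martingale strong law to conclude $\tfrac{1}{T}\int_0^T Dg(X_s)\cdot\D W_s\to 0$ a.s.\ simultaneously for a dense countable family of test functions on the event that the normalized running cost stays finite. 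The gradient estimate \cref{EL2.1B} provides a useful alternative route, allowing a direct martingale-LLN control of $M_T/T$ via a G\"onwall-type bound on $\langle M\rangle_T = 2\int_0^T\abs{Du(X_s)}^2\,\D s\le 2M_0\int_0^T u(X_s)f(X_s)\,\D s$, bypassing the occupation measures entirely.
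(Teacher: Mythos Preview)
Your approach is essentially the same as the paper's: for the lower bound you use the occupation-measure argument (limit points of mean empirical measures lie in $\eom$, then invoke \cref{LP} and lower semicontinuity), for the equality you use ergodicity under $\xi_u$ together with $\mu_u(F)=\overline\lambda$ from \cref{T2.1}\,(a), and for the characterization of optimal Markov controls you appeal to the fact that $\eom_F^*$ is a singleton (\cref{T2.1}\,(b)). The paper does exactly this, only it outsources the empirical-measure details and the pathwise statements to \cite[Lemma~3.4.6 and Theorem~3.4.7]{ABG12} rather than spelling them out. One minor remark: the It\^o identity for $u$ itself that you announce as the ``backbone'' is not actually used in your main arguments (you switch to bounded test functions for the lower bound and to the ergodic theorem for the upper bound), so it is more of an alternative route than a backbone---which you in fact acknowledge at the end.
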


\begin{proof}
The inequality in \cref{ET4.1A} follows from the fact that
limit points of mean empirical measures in $\cP(\Rd\times\Rd)$ are
infinitesimal measures for the operator $\Ag$ (see Lemma~3.4.6 and Theorem~3.4.7
in \cite{ABG12}) together with the definition of $\overline\lambda$ in
\cref{LP}.
The equality follows by the ergodicity of the process under the control
$\xi_u$ and the fact that $F$ is integrable under the
invariant probability measure as asserted in \cref{L2.3}.
The pathwise results also follow from results in \cite{ABG12} referenced above.
The verification part of the theorem follows from \cref{T2.1}.
\end{proof}

\section*{Acknowledgements}
The work of Ari Arapostathis and Luis Caffarelli was supported in part by
the National Science Foundation through grant DMS-1715210.
The work of Ari Arapostathis was also supported in part by
the Army Research Office through grant W911NF-17-1-001.
The research of Anup Biswas was supported in part by an INSPIRE faculty fellowship
and DST-SERB grant EMR/2016/004810.


\begin{bibdiv}
\begin{biblist}

\bib{Barles-16}{article}{
      author={Barles, Guy},
      author={Meireles, Joao},
       title={On unbounded solutions of ergodic problems in {$\mathbb{R}^m$}
  for viscous {H}amilton-{J}acobi equations},
        date={2016},
        ISSN={0360-5302},
     journal={Comm. Partial Differential Equations},
      volume={41},
      number={12},
       pages={1985\ndash 2003},
      review={\MR{3572566}},
}

\bib{Ichihara-12}{article}{
      author={Ichihara, Naoyuki},
       title={Large time asymptotic problems for optimal stochastic control
  with superlinear cost},
        date={2012},
        ISSN={0304-4149},
     journal={Stochastic Process. Appl.},
      volume={122},
      number={4},
       pages={1248\ndash 1275},
      review={\MR{2914752}},
}

\bib{Cirant-14}{article}{
      author={Cirant, Marco},
       title={On the solvability of some ergodic control problems in {$\mathbb
  R^d$}},
        date={2014},
        ISSN={0363-0129},
     journal={SIAM J. Control Optim.},
      volume={52},
      number={6},
       pages={4001\ndash 4026},
      review={\MR{3285897}},
}

\bib{Barles-01}{article}{
      author={Barles, G.},
      author={Souganidis, P.~E.},
       title={Space-time periodic solutions and long-time behavior of solutions
  to quasi-linear parabolic equations},
        date={2001},
        ISSN={0036-1410},
     journal={SIAM J. Math. Anal.},
      volume={32},
      number={6},
       pages={1311\ndash 1323},
      review={\MR{1856250}},
}

\bib{Ben-Fre}{book}{
      author={Bensoussan, Alain},
      author={Frehse, Jens},
       title={Regularity results for nonlinear elliptic systems and
  applications},
      series={Applied Mathematical Sciences},
   publisher={Springer-Verlag, Berlin},
        date={2002},
      volume={151},
        ISBN={3-540-67756-9},
      review={\MR{1917320}},
}

\bib{BenNag-91}{article}{
      author={Bensoussan, Alain},
      author={Nagai, Hideo},
       title={An ergodic control problem arising from the principal
  eigenfunction of an elliptic operator},
        date={1991},
        ISSN={0025-5645},
     journal={J. Math. Soc. Japan},
      volume={43},
      number={1},
       pages={49\ndash 65},
      review={\MR{1082422}},
}

\bib{Barles-10}{article}{
      author={Barles, Guy},
      author={Porretta, Alessio},
      author={Tchamba, Thierry~Tabet},
       title={On the large time behavior of solutions of the {D}irichlet
  problem for subquadratic viscous {H}amilton-{J}acobi equations},
        date={2010},
        ISSN={0021-7824},
     journal={J. Math. Pures Appl. (9)},
      volume={94},
      number={5},
       pages={497\ndash 519},
      review={\MR{2732926}},
}

\bib{Fujita-06a}{article}{
      author={Fujita, Yasuhiro},
      author={Ishii, Hitoshi},
      author={Loreti, Paola},
       title={Asymptotic solutions of viscous {H}amilton-{J}acobi equations
  with {O}rnstein-{U}hlenbeck operator},
        date={2006},
        ISSN={0360-5302},
     journal={Comm. Partial Differential Equations},
      volume={31},
      number={4-6},
       pages={827\ndash 848},
      review={\MR{2233043}},
}

\bib{Ichihara-13a}{article}{
      author={Ichihara, Naoyuki},
      author={Sheu, Shuenn-Jyi},
       title={Large time behavior of solutions of {H}amilton-{J}acobi-{B}ellman
  equations with quadratic nonlinearity in gradients},
        date={2013},
        ISSN={0036-1410},
     journal={SIAM J. Math. Anal.},
      volume={45},
      number={1},
       pages={279\ndash 306},
      review={\MR{3032978}},
}

\bib{BenFre-92}{article}{
      author={Bensoussan, A.},
      author={Frehse, J.},
       title={On {B}ellman equations of ergodic control in {${\bf R}^n$}},
        date={1992},
        ISSN={0075-4102},
     journal={J. Reine Angew. Math.},
      volume={429},
       pages={125\ndash 160},
      review={\MR{1173120}},
}

\bib{Lasry-89}{article}{
      author={Lasry, J.-M.},
      author={Lions, P.-L.},
       title={Nonlinear elliptic equations with singular boundary conditions
  and stochastic control with state constraints. {I}. {T}he model problem},
        date={1989},
        ISSN={0025-5831},
     journal={Math. Ann.},
      volume={283},
      number={4},
       pages={583\ndash 630},
      review={\MR{990591}},
}

\bib{AA-Harnack}{article}{
      author={Arapostathis, Ari},
      author={Ghosh, Mrinal~K.},
      author={Marcus, Steven~I.},
       title={Harnack's inequality for cooperative weakly coupled elliptic
  systems},
        date={1999},
     journal={Comm. Partial Differential Equations},
      volume={24},
      number={9-10},
       pages={1555\ndash 1571},
      review={\MR{1708101}},
}

\bib{Bogachev-01d}{article}{
      author={Bogachev, V.~I.},
      author={R\"{o}kner, M.},
       title={A generalization of {K}has\cprime minski\u{\i}'s theorem on the
  existence of invariant measures for locally integrable drifts},
        date={2000},
        ISSN={0040-361X},
     journal={Teor. Veroyatnost. i Primenen.},
      volume={45},
      number={3},
       pages={417\ndash 436},
      review={\MR{1967783}},
}

\bib{Bogachev-96}{article}{
      author={Bogachev, V.~I.},
      author={Krylov, N.},
      author={R\"{o}ckner, M.},
       title={Regularity of invariant measures: the case of non-constant
  diffusion part},
        date={1996},
        ISSN={0022-1236},
     journal={J. Funct. Anal.},
      volume={138},
      number={1},
       pages={223\ndash 242},
      review={\MR{1391637}},
}

\bib{LP16}{article}{
      author={Leonori, Tommaso},
      author={Porretta, Alessio},
       title={Large solutions and gradient bounds for quasilinear elliptic
  equations},
        date={2016},
     journal={Comm. Partial Differential Equations},
      volume={41},
      number={6},
       pages={952\ndash 998},
      review={\MR{3509520}},
}

\bib{ABBK}{article}{
      author={Arapostathis, A.},
      author={Biswas, A.},
      author={Borkar, V.~S.},
      author={Suresh~Kumar, K.},
       title={A variational characterization of the risk-sensitive average
  reward for controlled diffusions in $\mathbb{R}^d$},
        date={2019},
     journal={arXiv e-prints},
      volume={1903.08346},
      eprint={https://arxiv.org/abs/1903.08346},
}

\bib{ABG12}{book}{
      author={Arapostathis, A.},
      author={Borkar, V.~S.},
      author={Ghosh, M.~K.},
       title={Ergodic control of diffusion processes},
      series={Encyclopedia of Mathematics and its Applications},
   publisher={Cambridge University Press},
     address={Cambridge},
        date={2012},
      volume={143},
      review={\MR{2884272}},
}

\end{biblist}
\end{bibdiv}

\end{document}